\theoremstyle{definition}
\newtheorem{thm}{Theorem}[section]
\newtheorem{lem}[thm]{Lemma}
\newtheorem{Def}[thm]{Definition}
\newtheorem{rem}{Remark}
\newtheorem{ex}[thm]{Example}
\newtheorem*{Not}{Notation}
\newtheorem*{note}{Note}
\title{Study of $\textbf{\textit{\lowercase{p}}}$-Young tableaux, Robinson-Schensted correspondence and the lacunary Cauchy identity of group algebras $KG_{\lowercase{r}}$ and $KSG_{\lowercase{r}}$}
\author{M. Parvathi, A. Tamilselvi and D. Hepsi\\
	Ramanujan Institute for Advanced Study in Mathematics,\\ University of Madras, Chennai, INDIA.\\
	E.mail: tamilselvi.riasm@gmail.com}
\date{}
\begin{document}
	\maketitle
	\begin{abstract}
		In this paper, we develop the Robinson-Schensted correspondence between the elements of the groups $G_{r}$ $(\mathbb{Z}_{p^{r}}\rtimes \mathbb{Z}^{*}_{p^{r}})$ and $SG_{r}$ $(\mathbb{Z}_{p^{r-1}}\rtimes \mathbb{Z}^{*}_{p^{r}})$, along with a pair of the standard \textbf{\textit{p}}-Young tableaux. This approach differs from the classical method, and ours is based on matrix units arising from orthogonal primitive idempotents computed for every group algebra. Some classical properties of the Robinson-Schensted correspondence are discussed. As a by-product, we also extend the Cauchy identity to our setup, which we refer to as the lacunary Cauchy identity. This study offers new insights into the representation theory of these groups and their combinatorial structures.
		
		\bigskip \noindent Keywords: Semi-standard Young tableau, matrix units, Schur function, Cauchy identity, lacunary series. 
		
		\bigskip \noindent AMS Subject Classification: 05E10, 05E16, 05E18. 
		
	\end{abstract}
	
	\section{Introduction}
	The Robinson-Schensted correspondence is a fundamental result of the theory of symmetric functions and the representation theory. This establishes a bijection between permutations and pairs of standard Young tableaux of the same shape \cite{[BS]}. This correspondence was originally discovered by Craige Schensted in 1960 and later extended by Donald Knuth. The extension by Knuth, known as the Robinson-Schensted-Knuth correspondence, is a bijection between nonnegative matrices and the pair of semistandard Young tableaux that has broadened its applications to symmetric functions and random matrices \cite{[RS]}. A different form of the correspondence was described by Robinson in 1938 \cite{[R]}.
	
	Andrei Zelevinsky further generalized it to include ``pictures", expanding its relevance to complex geometric and algebraic structures \cite{[AVZ]}. In addition, it was extended in the following papers \cite{[PT1]}, \cite{[PT2]}, \cite{[TD]}, \cite{[TVK]}, and \cite{[HL]}. In general, this correspondence serves as a crucial link between various mathematical concepts and contributes significantly to the understanding of symmetric functions and related fields.
	
	In Section 2, we discuss some basic definitions and results, as well as review the foundational concepts relevant to this study. Section 3 focuses on establishing a one-to-one correspondence between the group $G_{r}$ (and $SG_{r}$) and the pairs of standard \textbf{\textit{p}}-Young tableaux  ($\textbf{\textit{SYT}}_{\textbf{\textit{p}}}$), illuminating the key properties of this bijection. Our approach differs from previous methods in that we establish matrix units for every group representation of $G_{r}$ and $SG_{r}$, leveraging them to naturally establish a correspondence between the elements and pairs of standard \textbf{\textit{p}}-Young tableaux.
	
	Section 4 introduces the notion of semi-standard \textbf{\textit{p}}-Young tableaux $(\textbf{\textit{SSYT}}_{\textbf{\textit{p}}})$, along with the concepts of \textbf{\textit{p}}-content and \textbf{\textit{p}}-Schur functions, culminating in the exploration of lacunary Cauchy identity. 
	
		\section{Preliminaries}
	\begin{Not}	
	
	The hook partition $\lambda(k,i)$ denotes the shape of a Young diagram with $k$ boxes, where the first row contains $k-i$ boxes, and the first column (excluding the first row) contains $i$ boxes.

	\begin{figure}[!ht]
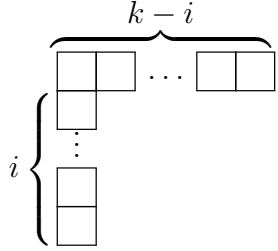

		\hspace*{6.8cm}$\overbrace{\hspace{7em}}^{\displaystyle k-i}$
		
		\centering
		\ytableausetup{centertableaux,boxsize=1.2em}
		\begin{tabular}{r@{}l}
			\raisebox{-1.5ex}{$i\left\{\vphantom{\begin{array}{c}~\\[6ex] ~
				\end{array}}\right.$} &
			\begin{ytableau}
				~               &           & \none[\hspace{0.3cm}\dots]  \hspace*{0.3cm} & &    \\ 
				~               & \none & \none      \\ 
				\none[\vdots]   & \none & \none      \\
				~               & \none & \none      \\
				~               & \none   
			\end{ytableau}
			\\[-0.5ex]
		\end{tabular}
		\caption{Diagram of $\lambda(k,i) = [k-i,1^{i}]$}
	\end{figure}
\end{Not}
	
	\noindent\textbf{Recall \cite{[TH]}:}
	\begin{itemize}
		\item In our study, we consider group algebras over a field of characteristic zero, which encompasses the $p^{r}$-th root of unity for all $r\geq 0$ and $p$ is an odd prime.
		\item Let $G_{r}$ be the group such that
		$$G_{r}=\{\tau^{i}g^{j}|~1\leq i\leq p^{r-1}(p-1),~ 1\leq j \leq p^{r} \}$$ where $g^{p^{r}}=e,~\tau^{p^{r-1}(p-1)}=e$ and $G_{r} \cong \mathbb{Z}_{p^{r}}\rtimes\mathbb{Z}^{*}_{p^{r}}$.
		\item  Let $SG_{r}$ be the subgroup of $G_{r}$ such that  $$SG_{r}=\{\tau^{i}g^{pj}|~1\leq i\leq p^{r-1}(p-1),~ 1\leq j \leq p^{r-1} \}$$
		where $(g^{p})^{p^{r-1}}=e,~\tau^{p^{r-1}(p-1)}=e$ and $SG_{r} \cong \mathbb{Z}_{p^{r-1}}\rtimes\mathbb{Z}^{*}_{p^{r}}$.     
		\item In \cite{[TH]}, we define the irreducible representations of the groups $G_{r}$ and $SG_{r}$. We also establish a bijection between the irreducible representations and the hook partitions. Using this bijection, each irreducible representation is indexed by a hook partition as follows: 
		
		Indexing set of the complete set of inequivalent irreducible representations of $G_{r}$
		\begin{itemize}
			\item $V^{r}_{0}=\{\lambda(p^{r-1}(p-1),i)\}_{0\leq i<p^{r-1}(p-1)}$, where each element in this set represents a one-dimensional representation.
			\item For $1\leq s\leq r$, $V_{s}^{r}=\{\lambda(p^{r-s}[2sp-(2s+1)],i)\}_{p^{r-s}[sp-(s+1)]\leq i<sp^{r-s}(p-1)}$, where each element in this set represents a $p^{s-1}(p-1)$-dimensional representation.
		\end{itemize}
		\item Indexing set of the complete set of inequivalent irreducible representations of $SG_{r}$
		\begin{itemize}
			\item $W^{r}_{0}=\{\lambda(p^{r-1}(p-1),i)\}_{0\leq i<p^{r-1}(p-1)}$, where each element in this set represents a one-dimensional representation.
			\item For $1\leq s\leq r-1$, $$W_{s}^{r}=\{\lambda(p^{r-s-1}[(2s+1)p-2(s+1)],i)\}_{p^{r-s-1}[sp-(s+1)]\leq i<(s+1)p^{r-s-1}(p-1)},$$ where each element in this set represents a $p^{s-1}(p-1)$-dimensional representation. 
		\end{itemize}
		\item $	E^{(r,s)}_{\rho^{i}} = \dfrac{1}{p^{r}}\sum\limits_{j=1}^{p^{r}}\rho^{ij}g^{j}$,	where $\rho$ is a primitive $p^{s}$-th root of unity, $(i,p^{s})=1$, $0\leq s\leq r$ and $r\geq 1$. 
		\item $\mathbb{E}^{(r,s)}_{\rho^{i}} = \dfrac{1}{p^{r-1}}\sum\limits_{j=1}^{p^{r-1}}\rho^{ij}(g^{p})^{j}$,
		where $\rho$ is a primitive $p^{s}$-th root of unity, $(i,p^{s})=1$ and $\ 0\leq s \leq r-1$ and $r\geq 1$.
		\item $T^{(r,s)}_{l} = \dfrac{1}{p^{r-s}}\sum\limits_{j=1}^{p^{r-s}}\beta^{lj}\tau^{jp^{s-1}(p-1)}, $	where $\beta$ is a $p^{r-s}$-th root of unity, $0\leq l \leq p^{r-s}-1$, $0\leq s\leq r$, and $r\geq 1$.
		\item $T^{(r)}_{l} = \dfrac{1}{p^{r-1}(p-1)}\displaystyle\sum_{j=1}^{p^{r-1}(p-1)}\zeta^{lj}\tau^{j},~0\leq l < p^{r-1}(p-1)$, where $\zeta$ is a $p^{r-1}(p-1)$-th root of unity and  $r\geq 1$.

	\end{itemize}
	\begin{rem}
		The block $B_{m,n}$ contains $m$-nodes horizontally and $n$-nodes vertically.
	\end{rem}
	\begin{Def}\label{block}\cite{[TH]}
		A hook partition $\lambda$ is obtained from a hook partition $\mu$ by adding a block $B_{m,n}$ of size $m+n$. This addition involves adding $m$ nodes at the end of the first row and $n$ nodes at the end of the first column. Similarly, removing block $B_{m,n}$ from the partition involves removing the last $m$ nodes in the first row and the last $n$ nodes in the first column. 
		
		\begin{figure}[!ht]
			\hspace*{9cm}$\overbrace{\hspace{3.5em}}^{\displaystyle m}$
			
			\centering
			
			\ytableausetup{centertableaux,boxsize=1.2em}
			\begin{tabular}{r@{}l}
				\raisebox{-5.5ex}{$n \left\{\vphantom{\begin{array}{c}~\\[2.2ex] ~
					\end{array}}\right.$} &
				\begin{ytableau}
					~               &       &       & \none[\dots]  &  & \star & \none[\dots] & \star  \\ 
					~               & \none & \none & \none         & \none \\
					\none[\vdots]   & \none & \none & \none         & \none \\
					~               & \none & \none & \none         & \none \\
					~                \star  \\ \none[\vdots] \\ \star
				\end{ytableau}
				\\[-0.5ex]				
			\end{tabular}
			\caption{}
			\label{yd}
		\end{figure}
		
		Each box in Figure \ref{yd} represents a node and the size of a block indicates the total number of nodes being added (removed). 
	\end{Def}	
	\begin{ex}
		Let us take a partition  $\lambda(2,1) =$ 
		\ytableausetup{centertableaux,boxsize =0.7em}
		\begin{ytableau}
			\none & & \\ \none &
		\end{ytableau} in $V^{1}_{1}$, add a block $B_{2-t,t},~0\leq t\leq 2$ to this partition. 
		
		We get 
		$\lambda(5,1)$ = \ytableausetup{centertableaux,boxsize =0.7em}
		\begin{ytableau}
			\none & & & \star & \star \\ \none &
		\end{ytableau},
		$\lambda(5,2)$ = \ytableausetup{centertableaux}
		\begin{ytableau}
			\none & & & \star \\ \none & \\ \none & \star
		\end{ytableau}, 
		$\lambda(5,3)$ =
		\begin{ytableau}
			\none  & & \\ \none & \\ \none & \star \\ \none  & \star
		\end{ytableau} in $W^{2}_{1}.$	
	\end{ex}
	\begin{note}
		In this study, we consider only the blocks of size $p^{k}(p-1),~p^{k}(p-1)^{2},~p^{k}(p-2)$, where $k\geq 0$. In our previous paper, we constructed the \textbf{\textit{p}}-Bratteli diagram using only these blocks. This study is related to our previous work and builds on it.
	\end{note}
	
	\begin{thm} \label{thm 4.1}\cite{[TH]}
		The idempotents $\{T^{(r,s)}_{l}E^{(r,s)}_{\rho^{i}}, \ 1\leq i\leq p^{s}, \ \text{and} \ (i,p^{s}) =1 \}_{0\leq l \leq p^{r-s}-1}$,  $1\leq s\leq r$, form a set of mutually orthogonal primitive idempotents that are conjugate, giving rise to an irreducible representation of degree $p^{s-1}(p-1)$, for every $l$ and for every $s$.
	\end{thm}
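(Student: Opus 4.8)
The plan is to reduce the whole statement to two elementary facts: that the two families of idempotents commute, and that the automorphism of $\langle g\rangle$ induced by $\tau$ is multiplication by a generator of $\mathbb{Z}^{*}_{p^{r}}$. Write $\tau g\tau^{-1}=g^{a}$ with $a$ a generator of $\mathbb{Z}^{*}_{p^{r}}$, and put $\sigma=\tau^{p^{s-1}(p-1)}$, so that $T^{(r,s)}_{l}$ is a polynomial in $\sigma$ and $\sigma$ has order $p^{r-s}$. First I would compute, by reindexing the sum defining $E^{(r,s)}_{\rho^{i}}$, that $\sigma\,E^{(r,s)}_{\rho^{i}}\,\sigma^{-1}=E^{(r,s)}_{\rho^{ic^{-1}}}$ with $c=a^{p^{s-1}(p-1)}$. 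By Euler's theorem $c=a^{\phi(p^{s})}\equiv 1\pmod{p^{s}}$, and since $\rho$ is a $p^{s}$-th root of unity this conjugate is $E^{(r,s)}_{\rho^{i}}$ itself. Hence $\sigma$, and therefore $T^{(r,s)}_{l}$, commutes with $E^{(r,s)}_{\rho^{i}}$; this congruence is the arithmetic core of the statement, and from it idempotency of the product $T^{(r,s)}_{l}E^{(r,s)}_{\rho^{i}}$ is immediate.

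For mutual orthogonality I would use commutativity to write
\[ T^{(r,s)}_{l}E^{(r,s)}_{\rho^{i}}\,T^{(r,s)}_{l'}E^{(r,s)}_{\rho^{i'}} =\bigl(T^{(r,s)}_{l}T^{(r,s)}_{l'}\bigr)\bigl(E^{(r,s)}_{\rho^{i}}E^{(r,s)}_{\rho^{i'}}\bigr), \]
and evaluate each factor by orthogonality of roots of unity: $T^{(r,s)}_{l}T^{(r,s)}_{l'}=\delta_{l,l'}\,T^{(r,s)}_{l}$ in $K[\langle\sigma\rangle]$ and $E^{(r,s)}_{\rho^{i}}E^{(r,s)}_{\rho^{i'}}=\delta_{i\equiv i'\,(\mathrm{mod}\ p^{s})}\,E^{(r,s)}_{\rho^{i}}$ in $K[\langle g\rangle]$. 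As the admissible exponents $i$ with $(i,p^{s})=1$ are pairwise distinct modulo $p^{s}$, the product vanishes unless $(l,i)=(l',i')$. For conjugacy I would conjugate by $\tau^{m}$: this fixes $T^{(r,s)}_{l}$ (since $\tau$ commutes with $\sigma$) and sends $E^{(r,s)}_{\rho^{i}}$ to $E^{(r,s)}_{\rho^{ia^{-m}}}$. Because the reduction $\mathbb{Z}^{*}_{p^{r}}\to\mathbb{Z}^{*}_{p^{s}}$ carries the generator $a$ to a generator, the residues $ia^{-m}$ run through every unit modulo $p^{s}$; thus the family $\{T^{(r,s)}_{l}E^{(r,s)}_{\rho^{i}}\}_{(i,p^{s})=1}$ forms a single conjugacy orbit.

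To obtain the degree I would apply the trace identity $\dim_{K}KG_{r}e=|G_{r}|\,c$, where $c$ is the coefficient of the group identity in the idempotent $e=T^{(r,s)}_{l}E^{(r,s)}_{\rho^{i}}$. In the semidirect product $\tau^{a}g^{b}=e$ forces $a\equiv0$ and $b\equiv0$, so only the summand with exponent $p^{r-s}$ in $T^{(r,s)}_{l}$ and exponent $p^{r}$ in $E^{(r,s)}_{\rho^{i}}$ contributes a term at the identity; thus $c=p^{-(r-s)}p^{-r}$ and, with $|G_{r}|=p^{2r-1}(p-1)$, one finds $\dim_{K}KG_{r}e=p^{s-1}(p-1)$, exactly the asserted degree.

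The main obstacle is primitivity, equivalently irreducibility of the module $KG_{r}e$, since the dimension count alone does not exclude a decomposition into isomorphic pieces. Here I would pass to the inertia subgroup $I=\langle g\rangle\rtimes\langle\sigma\rangle$ of order $p^{2r-s}$. Repeating the coefficient-of-identity computation inside $K[I]$ gives $\dim_{K}K[I]e=|I|\,c=1$, so $e$ is automatically a primitive idempotent of $K[I]$, affording the one-dimensional representation $\theta$ of $I$ determined by $g\mapsto\rho^{i}$ and $\sigma\mapsto\beta^{l}$. Consequently $KG_{r}e\cong\mathrm{Ind}_{I}^{G_{r}}\theta$, and irreducibility follows from Mackey's criterion: for $\tau^{m}\notin I$ the two characters of the normal subgroup $\langle g\rangle\subseteq I\cap\tau^{m}I\tau^{-m}$ arising from $\theta$ and its $\tau^{m}$-conjugate are $g\mapsto\rho^{i}$ and $g\mapsto\rho^{ia^{-m}}$, which differ because $a^{-m}\not\equiv 1\pmod{p^{s}}$; distinct characters of the abelian group $\langle g\rangle$ are disjoint, so the induced module is irreducible of degree $[G_{r}:I]=p^{s-1}(p-1)$. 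Finally, matching this degree together with the $\tau$-orbit of characters against the indexing set $V^{r}_{s}$ recorded in the preliminaries identifies the representation with its expected hook-partition label.
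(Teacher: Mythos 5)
Your argument is correct and complete: the congruence $a^{\phi(p^{s})}\equiv 1\pmod{p^{s}}$ does make $T^{(r,s)}_{l}$ and $E^{(r,s)}_{\rho^{i}}$ commute, the orthogonality relations for roots of unity give idempotency and mutual orthogonality, the trace formula $\dim_{K}KG_{r}e=|G_{r}|\cdot c$ with $c=p^{-(2r-s)}$ gives the degree $p^{s-1}(p-1)$, and the passage to the inertia subgroup $I=\langle g\rangle\rtimes\langle\tau^{p^{s-1}(p-1)}\rangle$ together with Mackey's criterion correctly settles primitivity, which is the one point a pure dimension count cannot reach. Note, however, that the present paper does not prove this statement at all: Theorem \ref{thm 4.1} is recalled from the authors' earlier work \cite{[TH]}, so there is no in-text proof to compare against. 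Judging from the companion results that the paper does use (Theorems \ref{thm 5.1} and \ref{thm 5.3}, where a full system of matrix units $E^{(s_{r},l)}_{ij}=\tau^{-(i-1)}E^{(s_{r},l)}_{1}\tau^{(i-1)}\tau^{j-i}$ is written down explicitly), the original route is a direct verification that these elements satisfy the matrix-unit relations, which yields primitivity, conjugacy and the degree all at once and is what the Robinson--Schensted construction in Section 3 actually consumes. Your Clifford--Mackey argument is more conceptual and shorter, but it establishes irreducibility abstractly rather than producing the explicit matrix units; conversely, it has the advantage of making transparent \emph{why} the degree is $[G_{r}:I]=p^{s-1}(p-1)$ and why the idempotents for fixed $l$ form a single conjugacy orbit. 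One small point of care: for the relations $T^{(r,s)}_{l}T^{(r,s)}_{l'}=\delta_{l,l'}T^{(r,s)}_{l}$ to hold you must take $\beta$ to be a \emph{primitive} $p^{r-s}$-th root of unity, which is surely intended in the paper's notation but worth stating.
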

	\begin{thm}\label{thm 4.3}\cite{[TH]}
		The idempotents $\{T^{(r,s)}_{l}\mathbb{E}^{(r,s)}_{\rho^{i}}, \ 1\leq i\leq p^{s}, \ \text{and}\ (i,p^{s})=1\}_{0\leq l \leq p^{r-s}-1}$, $ 1\leq s\leq r-1$ form a set of mutually orthogonal primitive idempotents that are conjugate, giving rise to an irreducible representation of degree $p^{s-1}(p-1)$, for every $l$ and for every $s$.
	\end{thm}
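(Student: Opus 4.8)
The plan is to realize $T^{(r,s)}_{l}$ and $\mathbb{E}^{(r,s)}_{\rho^{i}}$ as spectral projections for two cyclic actions inside $KSG_{r}$ and to follow the scheme of Theorem~\ref{thm 4.1}, the sole change being that the abelian generator is now $g^{p}$, of order $p^{r-1}$, rather than $g$. Fix a primitive root $c$ modulo $p^{r}$ and write the defining relation of the semidirect product as $\tau g\tau^{-1}=g^{c}$, so that $\tau^{t}g^{p}\tau^{-t}=(g^{p})^{c^{t}}$ for every $t$. Since $g^{p}$ has order $p^{r-1}$ and $\rho$ is a primitive $p^{s}$-th root of unity with $s\le r-1$, the usual geometric-sum identities give $\bigl(\mathbb{E}^{(r,s)}_{\rho^{i}}\bigr)^{2}=\mathbb{E}^{(r,s)}_{\rho^{i}}$ and $\mathbb{E}^{(r,s)}_{\rho^{i}}\mathbb{E}^{(r,s)}_{\rho^{i'}}=\delta_{i,i'}\mathbb{E}^{(r,s)}_{\rho^{i}}$ for $i,i'$ prime to $p^{s}$; likewise, as $\tau^{p^{s-1}(p-1)}$ has order $p^{r-s}$, the $T^{(r,s)}_{l}$ are orthogonal idempotents with $\sum_{l}T^{(r,s)}_{l}=e$.

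The decisive step, and the one I expect to be the main obstacle, is the commutation
\[
\tau^{\,p^{s-1}(p-1)}\,\mathbb{E}^{(r,s)}_{\rho^{i}}\,\tau^{-p^{s-1}(p-1)}=\mathbb{E}^{(r,s)}_{\rho^{i}}.
\]
Conjugation replaces each $(g^{p})^{j}$ by $(g^{p})^{j c'}$ with $c'=c^{\,p^{s-1}(p-1)}$, and re-indexing turns the left side into $\mathbb{E}^{(r,s)}_{\rho^{i(c')^{-1}}}$; the identity thus reduces to the number-theoretic fact $c^{\,p^{s-1}(p-1)}\equiv 1\pmod{p^{s}}$, which holds because $c$ is a primitive root modulo $p^{s}$ of order $\phi(p^{s})=p^{s-1}(p-1)$. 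This is exactly why the ``lacunary'' exponent $p^{s-1}(p-1)$ is forced: it is the smallest power killing the $g^{p}$-twist modulo $p^{s}$. Granting it, $T^{(r,s)}_{l}$ commutes with every $\mathbb{E}^{(r,s)}_{\rho^{i}}$, so each product is idempotent; moreover, for $(l,i)\neq(l',i')$, commutativity together with orthogonality of the $T$'s and of the $\mathbb{E}$'s forces $\bigl(T^{(r,s)}_{l}\mathbb{E}^{(r,s)}_{\rho^{i}}\bigr)\bigl(T^{(r,s)}_{l'}\mathbb{E}^{(r,s)}_{\rho^{i'}}\bigr)=T^{(r,s)}_{l}T^{(r,s)}_{l'}\,\mathbb{E}^{(r,s)}_{\rho^{i}}\mathbb{E}^{(r,s)}_{\rho^{i'}}=0$, giving mutual orthogonality.

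For conjugacy I would use that conjugation by $\tau$ fixes $T^{(r,s)}_{l}$ (powers of $\tau$ commute) while sending $\mathbb{E}^{(r,s)}_{\rho^{i}}\mapsto\mathbb{E}^{(r,s)}_{\rho^{i c^{-1}}}$. As $c$ generates $(\mathbb{Z}/p^{s}\mathbb{Z})^{*}$, the map $i\mapsto ic^{-1}$ permutes the $\phi(p^{s})=p^{s-1}(p-1)$ residues prime to $p^{s}$ transitively, so for fixed $l$ the idempotents $\{T^{(r,s)}_{l}\mathbb{E}^{(r,s)}_{\rho^{i}}\}_{(i,p^{s})=1}$ form a single conjugacy orbit of size $p^{s-1}(p-1)$; their $\tau$-stable sum is readily checked to be central.

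Finally, for primitivity and the degree I would compute $e\,x\,e$ for $e=T^{(r,s)}_{l}\mathbb{E}^{(r,s)}_{\rho^{i}}$ on the basis $x=\tau^{b}(g^{p})^{k}$: using $\mathbb{E}^{(r,s)}_{\rho^{i}}\tau^{b}=\tau^{b}\mathbb{E}^{(r,s)}_{\rho^{i c^{b}}}$, the eigen-relation $(g^{p})^{k}\mathbb{E}^{(r,s)}_{\rho^{i}}=\rho^{-ik}\mathbb{E}^{(r,s)}_{\rho^{i}}$, and $\tau^{\,j p^{s-1}(p-1)}T^{(r,s)}_{l}=\beta^{-lj}T^{(r,s)}_{l}$, one finds that $exe=0$ unless $p^{s-1}(p-1)\mid b$, and otherwise $exe$ is a scalar multiple of $e$. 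Hence $e\,KSG_{r}\,e=Ke$ is one-dimensional, so $e$ is primitive and $KSG_{r}e$ is irreducible. Its degree equals the size $p^{s-1}(p-1)$ of the conjugacy orbit that fills out the central idempotent $\sum_{(i,p^{s})=1}T^{(r,s)}_{l}\mathbb{E}^{(r,s)}_{\rho^{i}}$; a concluding check that these degrees and multiplicities reproduce the classification of the $W^{r}_{s}$ recalled above, via $\sum_{V}(\dim V)^{2}=p^{2(r-1)}(p-1)=|SG_{r}|$, confirms that the family exhausts the irreducible representations of $KSG_{r}$.
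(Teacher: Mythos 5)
The paper does not prove this statement---it is recalled verbatim from \cite{[TH]}---but your argument is correct and follows exactly the scheme implicit in the recalled matrix-unit constructions (Theorems \ref{thm 5.1} and \ref{thm 5.3}): character idempotents for $\langle g^{p}\rangle$ and $\langle\tau^{p^{s-1}(p-1)}\rangle$, the commutation forced by $c^{p^{s-1}(p-1)}\equiv 1\pmod{p^{s}}$, conjugation by $\tau$ permuting the $\mathbb{E}^{(r,s)}_{\rho^{i}}$ transitively, and $e\,KSG_{r}\,e=Ke$ for primitivity. The only step you leave implicit is that centrality of $\sum_{(i,p^{s})=1}T^{(r,s)}_{l}\mathbb{E}^{(r,s)}_{\rho^{i}}$ also requires invariance under conjugation by $g^{p}$ (not just by $\tau$), which holds by the same congruence, so the sketch is sound.
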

	
	\begin{thm} \label{thm 5.1}\cite{[TH]}
		Matrix units for the irreducible representation $\lambda(p^{r-s}[2sp-(2s+1)],k)$ defined in Theorem \ref{thm 4.1} of group $G_{r}$ are given by 
		
		$(E^{(s_{r},l)}_{ij}) = \tau^{-(i-1)}E^{(s_{r},l)}_{1}\tau^{(i-1)}\tau^{j-i}, \ 1\leq i,j\leq p^{s-1}(p-1)$ for $1\leq s \leq r$,
		
		\noindent where $E^{(s_{r},l)}_{1} = T^{(r,s)}_{l}E^{(r,s)}_{\rho}$ for $1\leq s \leq r$ and $0\leq l< p^{r-s}$.
	\end{thm}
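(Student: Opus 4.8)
The plan is to verify that the family $\{E^{(s_r,l)}_{ij}\}_{1\le i,j\le d}$, with $d=p^{s-1}(p-1)$, is a complete set of matrix units for the block of $KG_r$ affording $\lambda(p^{r-s}[2sp-(2s+1)],k)$; that is, to prove the relations $E^{(s_r,l)}_{ij}E^{(s_r,l)}_{kl'}=\delta_{jk}\,E^{(s_r,l)}_{il'}$, that each diagonal element $E^{(s_r,l)}_{ii}$ is a primitive idempotent, and that $\sum_{i}E^{(s_r,l)}_{ii}$ is the identity of that block. First I would simplify the defining formula: since $\tau^{(i-1)}\tau^{j-i}=\tau^{\,j-1}$, we have $E^{(s_r,l)}_{ij}=\tau^{1-i}E^{(s_r,l)}_{1}\tau^{\,j-1}$, so the whole computation is controlled by conjugation of the single idempotent $E^{(s_r,l)}_{1}=T^{(r,s)}_{l}E^{(r,s)}_{\rho}$ by powers of $\tau$.

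The two structural facts I would establish first concern how $\tau$ interacts with the factors of $E^{(s_r,l)}_1$. Let $a$ be the integer, a primitive root modulo $p^r$, determined by the semidirect product action $\tau g\tau^{-1}=g^{a}$. Conjugating the definition of $E^{(r,s)}_{\rho^i}$ and reindexing the sum over powers of $g$ gives $\tau E^{(r,s)}_{\rho^i}\tau^{-1}=E^{(r,s)}_{\rho^{ia^{-1}}}$, so $\tau$ cyclically permutes the primitive $p^s$-th roots of unity appearing in these idempotents; in particular $\tau^{m}E^{(s_r,l)}_{1}\tau^{-m}=T^{(r,s)}_{l}E^{(r,s)}_{\rho^{a^{-m}}}$, using that $\tau^{\,m}$ commutes with $T^{(r,s)}_{l}$. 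Secondly, because $a^{p^{s-1}(p-1)}\equiv 1\pmod{p^s}$, the same reindexing shows $\tau^{p^{s-1}(p-1)}$ commutes with every $E^{(r,s)}_{\rho^i}$, whence $T^{(r,s)}_{l}$ (a polynomial in $\tau^{p^{s-1}(p-1)}$) commutes with each $E^{(r,s)}_{\rho^i}$ as well.

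With these in hand, the matrix-unit relation reduces, via $E^{(s_r,l)}_{ij}E^{(s_r,l)}_{kl'}=\tau^{1-i}\big(E^{(s_r,l)}_1\tau^{\,j-k}E^{(s_r,l)}_1\big)\tau^{\,l'-1}$, to the single identity $E^{(s_r,l)}_1\tau^{\,m}E^{(s_r,l)}_1=\delta_{d\mid m}\,E^{(s_r,l)}_1$. I would prove this by writing $E^{(s_r,l)}_1\tau^{\,m}E^{(s_r,l)}_1=\big(E^{(s_r,l)}_1(\tau^{\,m}E^{(s_r,l)}_1\tau^{-m})\big)\tau^{\,m}$, moving $T^{(r,s)}_l$ past the $E^{(r,s)}$ factors using the commutation above together with $\big(T^{(r,s)}_l\big)^2=T^{(r,s)}_l$, and then invoking the orthogonality $E^{(r,s)}_{\rho^u}E^{(r,s)}_{\rho^v}=\delta_{u\equiv v\,(p^s)}E^{(r,s)}_{\rho^v}$, which follows from the geometric sum $\sum_{j}\rho^{(u-v)j}$ vanishing unless $u\equiv v\pmod{p^s}$. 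This leaves the condition $a^{-m}\equiv 1\pmod{p^s}$, i.e. $d\mid m$ since $a$ is a primitive root modulo $p^s$ of order $d=\phi(p^s)$. As $m=j-k$ ranges in $\{-(d-1),\dots,d-1\}$, the divisibility forces $m=0$, i.e. $j=k$, giving $E^{(s_r,l)}_{ij}E^{(s_r,l)}_{kl'}=\delta_{jk}E^{(s_r,l)}_{il'}$.

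Finally, specializing to $j=i$ yields $E^{(s_r,l)}_{ii}=T^{(r,s)}_{l}E^{(r,s)}_{\rho^{a^{i-1}}}$, and as $i$ runs from $1$ to $d$ the exponents $a^{i-1}$ run once through all units modulo $p^s$; hence the $E^{(s_r,l)}_{ii}$ are exactly the $d$ mutually orthogonal conjugate primitive idempotents of Theorem \ref{thm 4.1}, so they are primitive and their sum is the identity of the block affording the degree-$d$ representation. Since $E^{(s_r,l)}_{11}=E^{(s_r,l)}_1\neq 0$, the multiplication rule forces every $E^{(s_r,l)}_{ij}$ to be nonzero and linearly independent, so $\{E^{(s_r,l)}_{ij}\}$ spans a subalgebra isomorphic to $M_d(K)$ realizing $\lambda(p^{r-s}[2sp-(2s+1)],k)$, as claimed. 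The main obstacle I anticipate is the core identity $E^{(s_r,l)}_1\tau^{\,m}E^{(s_r,l)}_1=\delta_{d\mid m}E^{(s_r,l)}_1$: marshalling the three ingredients — the $\tau$-invariance of $T^{(r,s)}_l$, the cyclic shift of the $E^{(r,s)}_{\rho^i}$, and the number-theoretic fact that $a$ generates $(\mathbb{Z}/p^s)^{*}$ — in the right order, and carefully tracking the exponents modulo $p^s$ against the ambient order modulo $p^r$, is where the care is needed.
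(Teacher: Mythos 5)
The paper itself offers no proof of this statement: Theorem \ref{thm 5.1} is quoted in the Preliminaries with a citation to \cite{[TH]}, so there is no in-text argument to compare yours against, and what you have written is a self-contained verification of the cited formula. Your argument is correct and is essentially the only natural one. The reduction of all the matrix-unit relations to the single identity $E^{(s_{r},l)}_{1}\tau^{m}E^{(s_{r},l)}_{1}=\delta_{d\mid m}\,E^{(s_{r},l)}_{1}\tau^{m}$, proved from the conjugation rule $\tau E^{(r,s)}_{\rho^{i}}\tau^{-1}=E^{(r,s)}_{\rho^{ia^{-1}}}$, the commutation of $T^{(r,s)}_{l}$ with the $E^{(r,s)}_{\rho^{i}}$ (valid because $a^{p^{s-1}(p-1)}\equiv 1\pmod{p^{s}}$), and the orthogonality $E^{(r,s)}_{\rho^{u}}E^{(r,s)}_{\rho^{v}}=\delta_{u\equiv v\,(p^{s})}E^{(r,s)}_{\rho^{v}}$, is exactly the mechanism underlying Theorem \ref{thm 4.1}; and your identification $E^{(s_{r},l)}_{ii}=T^{(r,s)}_{l}E^{(r,s)}_{\rho^{a^{i-1}}}$, with $a^{i-1}$ running once over $(\mathbb{Z}/p^{s}\mathbb{Z})^{*}$, correctly ties the diagonal elements to the complete set of conjugate primitive idempotents of that theorem, which is what makes the family span the whole $d\times d$ block. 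Two points deserve care. First, your core identity as first stated, $E^{(s_{r},l)}_{1}\tau^{m}E^{(s_{r},l)}_{1}=\delta_{d\mid m}E^{(s_{r},l)}_{1}$, is not literally true for all $m$: the computation yields $\delta_{d\mid m}\,E^{(s_{r},l)}_{1}\tau^{m}$, and for $m=d$ with $s<r$ one gets $E^{(s_{r},l)}_{1}\tau^{d}=\beta^{-l}E^{(s_{r},l)}_{1}$, a scalar multiple rather than $E^{(s_{r},l)}_{1}$ itself. You only ever apply it for $m=j-k$ with $|m|<d$, where $d\mid m$ forces $m=0$ and the two versions agree, so the proof stands, but the statement should be restricted accordingly. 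Second, the relation $\tau g\tau^{-1}=g^{a}$ with $a$ a primitive root modulo $p^{r}$ is nowhere written in this paper; it is the intended presentation of $\mathbb{Z}_{p^{r}}\rtimes\mathbb{Z}^{*}_{p^{r}}$ and everything downstream (the conjugation formula, the order of $a$ modulo $p^{s}$) depends on it, so it should be recorded explicitly as a hypothesis rather than left implicit.
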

	\begin{thm}\label{thm 5.3}\cite{[TH]}
		Matrix units for the irreducible representation $\lambda(p^{r-s-1}[(2s+1)p-2(s+1)],k)$ defined in Theorem \ref{thm 4.3} of group $SG_{r}$ are given by 
		
		$(\mathbb{E}^{(s_{r},l)}_{ij}) = \tau^{-(i-1)}\mathbb{E}^{(s_{r},l)}_{1}\tau^{(i-1)}\tau^{(j-i)}, \ 1\leq i,j\leq p^{s-1}(p-1)$ for $1\leq s \leq r-1$,
		
		\noindent where $\mathbb{E}^{(s_{r},l)}_{1} = T^{(r,s)}_{l}\mathbb{E}^{(r,s)}_{\rho}$ for $1\leq s \leq r-1$ and $0\leq l < p^{r-s}$.
	\end{thm}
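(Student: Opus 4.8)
The plan is to verify directly that the elements $\mathbb{E}^{(s_r,l)}_{ij}$ form a complete system of matrix units for the simple block indexed by $\lambda(p^{r-s-1}[(2s+1)p-2(s+1)],k)$, i.e.\ that $\mathbb{E}^{(s_r,l)}_{ij}\mathbb{E}^{(s_r,l)}_{kn}=\delta_{jk}\mathbb{E}^{(s_r,l)}_{in}$ and that $\sum_i \mathbb{E}^{(s_r,l)}_{ii}$ is the central idempotent of that block. This is the exact analogue of Theorem~\ref{thm 5.1} for $G_r$, with $E^{(r,s)}_\rho$ replaced by $\mathbb{E}^{(r,s)}_\rho$, so I would follow the same line of argument and point to Theorem~\ref{thm 4.3} wherever Theorem~\ref{thm 5.1} invokes Theorem~\ref{thm 4.1}.

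First I would record the telescoping $\tau^{(i-1)}\tau^{j-i}=\tau^{j-1}$, so that $\mathbb{E}^{(s_r,l)}_{ij}=\tau^{-(i-1)}\mathbb{E}^{(s_r,l)}_1\tau^{j-1}$; write $d:=p^{s-1}(p-1)$ for the degree. The diagonal entries are $\mathbb{E}^{(s_r,l)}_{ii}=\tau^{-(i-1)}\mathbb{E}^{(s_r,l)}_1\tau^{i-1}$, which by Theorem~\ref{thm 4.3} are already known to be mutually orthogonal primitive idempotents whose sum is the block identity; this settles the relations with $j=i,\ n=k$ and the completeness statement for free. For the general product, multiplication gives $\mathbb{E}^{(s_r,l)}_{ij}\mathbb{E}^{(s_r,l)}_{kn}=\tau^{-(i-1)}\,\mathbb{E}^{(s_r,l)}_1\,\tau^{j-k}\,\mathbb{E}^{(s_r,l)}_1\,\tau^{n-1}$, so the whole statement collapses to the single key identity: $\mathbb{E}^{(s_r,l)}_1\,\tau^{m}\,\mathbb{E}^{(s_r,l)}_1$ equals $\mathbb{E}^{(s_r,l)}_1$ when $m\equiv 0\pmod d$ and equals $0$ otherwise, for $-d<m<d$. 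Granting this, the middle factor forces $j=k$ and returns $\delta_{jk}\,\tau^{-(i-1)}\mathbb{E}^{(s_r,l)}_1\tau^{n-1}=\delta_{jk}\mathbb{E}^{(s_r,l)}_{in}$, as desired.

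The substance is therefore the key identity, which I would deduce from two facts about conjugation by $\tau$. The first is periodicity: $\tau^{d}$ commutes with $\mathbb{E}^{(s_r,l)}_1$, which holds because $T^{(r,s)}_l$ is a polynomial in $\tau^{d}=\tau^{p^{s-1}(p-1)}$ and because conjugation by $\tau^{d}$ fixes $\mathbb{E}^{(r,s)}_\rho$. The second is that for $m\not\equiv 0\pmod d$ the conjugate $\tau^{m}\mathbb{E}^{(s_r,l)}_1\tau^{-m}$ is a \emph{different} member of the orthogonal family of Theorem~\ref{thm 4.3}; writing $\mathbb{E}^{(s_r,l)}_1\tau^{m}\mathbb{E}^{(s_r,l)}_1=\mathbb{E}^{(s_r,l)}_1\big(\tau^{m}\mathbb{E}^{(s_r,l)}_1\tau^{-m}\big)\tau^{m}$, orthogonality annihilates the product when $m\not\equiv 0$, while for $m=0$ it is just $(\mathbb{E}^{(s_r,l)}_1)^2=\mathbb{E}^{(s_r,l)}_1$.

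The main obstacle is the precise bookkeeping of the $\tau$-action on $\mathbb{E}^{(r,s)}_\rho$. Since $\tau g\tau^{-1}=g^{a}$ with $a$ a primitive root modulo $p^{r}$ (the generator of $\mathbb{Z}^{*}_{p^{r}}$ that $\tau$ realizes), conjugation sends $g^{p}\mapsto (g^{p})^{a}$ and hence cyclically permutes the eigenprojections $\mathbb{E}^{(r,s)}_{\rho^{i}}$ by multiplication $i\mapsto a^{-1}i$ on the exponents. I must check that the orbit of $\mathbb{E}^{(r,s)}_\rho$ under this permutation has length exactly $d=p^{s-1}(p-1)$, so that the $d$ conjugates in Theorem~\ref{thm 4.3} are genuinely distinct and the period in the key identity is $d$ rather than a proper divisor. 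This is where the $SG_r$ computation differs in detail from Theorem~\ref{thm 5.1}: the ambient cyclic group is now $\langle g^{p}\rangle\cong\mathbb{Z}_{p^{r-1}}$ rather than $\langle g\rangle\cong\mathbb{Z}_{p^{r}}$, so one must note that $s\le r-1$ guarantees the primitive $p^{s}$-th root $\rho$ still occurs among the characters of $\langle g^{p}\rangle$; the order of $a$ modulo $p^{s}$ is nonetheless still $p^{s-1}(p-1)=d$, because a generator of $\mathbb{Z}^{*}_{p^{r}}$ reduces to a generator of $\mathbb{Z}^{*}_{p^{s}}$. Once this orbit-length count is in place, the remainder of the argument transfers verbatim from the $G_r$ case.
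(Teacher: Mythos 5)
Your argument is correct, but note that the paper you were given does not actually prove this statement: Theorem \ref{thm 5.3} is imported verbatim from the authors' earlier paper \cite{[TH]} and is only quoted here, so there is no in-paper proof to compare against. Your verification is the natural one and all the delicate points are handled properly: the reduction of the matrix-unit relations to the single identity $\mathbb{E}^{(s_{r},l)}_{1}\tau^{m}\mathbb{E}^{(s_{r},l)}_{1}=\delta_{m\equiv 0\ (\mathrm{mod}\ d)}\,\mathbb{E}^{(s_{r},l)}_{1}$ for $-d<m<d$, the observation that $T^{(r,s)}_{l}$ is a polynomial in $\tau^{d}$ and hence commutes with everything in sight, and--crucially--the orbit-length count, namely that a generator of $\mathbb{Z}^{*}_{p^{r}}$ reduces (for odd $p$) to a generator of $\mathbb{Z}^{*}_{p^{s}}$, so conjugation by $\tau^{m}$ with $0<m<d$ genuinely moves $\mathbb{E}^{(r,s)}_{\rho}$ to a distinct orthogonal idempotent of the family in Theorem \ref{thm 4.3} even though the ambient cyclic group is now $\langle g^{p}\rangle\cong\mathbb{Z}_{p^{r-1}}$.
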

	The following remark is an immediate consequence of Theorem 3.4.1 in \cite{[TH]}.
	\begin{rem}\label{rem1}
		\begin{enumerate}
			\item[]
			\item Let $\lambda(p^{r-s}(2sp-(2s+1)),k^{'})\in V^{r}_{s}$, and let $k=p^{r-s}(2sp-(2s+1))$. Then
			
			$\lambda(k,\frac{k-1}{2}\pm p^{r-s-1}j_{1}\pm p^{r-s-2}j_{2}\pm \dots \pm pj_{r-s-1}\pm j_{r-s})$ represents $T^{(r,s)}_{l}E^{(r,s)}_{\rho^{j}},$ 
			
			\noindent where 
			
			$k^{'}=\frac{k-1}{2}\pm p^{r-s-1}j_{1}\pm p^{r-s-2}j_{2}\pm \dots \pm pj_{r-s-1}\pm j_{r-s}$ and
			$l=(p-1)\displaystyle\sum_{i=1}^{r-s}p^{r-s-i}\nu(j_{i}),$ with
			$\nu(j_{i})
			=\begin{cases}
				2j_{i}, ~\text{if the coefficient of} ~p^{r-s-i}j_{i} ~\text{is}~ -1 \\
				2j_{i}-1,~ \text{if the coefficient of}~ p^{r-s-i}j_{i} ~\text{is}~ 1\\
				0, ~\text{if} ~j_{i}=0
			\end{cases}$ and
			$0\leq j_{i}\leq \frac{p-1}{2}$ for $i=1,\dots,r-s$. \label{G}
			\item Let $\lambda(p^{r-s}((2s+1)p-(2s+2)),k^{'})\in W^{r+1}_{s}$ and $k=p^{r-s}((2s+1)p-(2s+2))$. Then
			
			$\lambda(k,\frac{k-1}{2}\pm p^{r-s}j_{1}\pm p^{r-s-1}j_{2}\pm \dots \pm pj_{r-s}\pm j_{r-s+1})$ represents $T^{(r+1,s)}_{l}\mathbb{E}^{(r+1,s)}_{\rho^{j}},$ 
			
			\noindent where 
			
			$k^{'}=\frac{k-1}{2}\pm p^{r-s}j_{1}\pm p^{r-s-1}j_{2}\pm \dots \pm pj_{r-s}\pm j_{r-s+1}$ and $l=(p-1)\displaystyle\sum_{i=1}^{r-s+1}p^{r-s+1-i}\nu(j_{i}),$ with 
			$\nu(j_{i})
			=\begin{cases}
				2j_{i}, ~\text{if the coefficient of} ~p^{r-s-i}j_{i} ~\text{is}~ -1 \\
				2j_{i}-1,~ \text{if the coefficient of}~ p^{r-s-i}j_{i} ~\text{is}~ 1\\
				0, ~\text{if} ~j_{i}=0
			\end{cases}$ and
			$0\leq j_{i}\leq \frac{p-1}{2}$ for $i=1,\dots,r-s+1$. \label{SG}	
			\item Let $\lambda(p^{r-1}(p-1),k^{'})$, with $~0\leq k^{'}<p^{r-1}(p-1)$, be a hook partitions in $V^{r}_{0}$ or $W^{r}_{0}$. Then the hook partition
			$$\lambda(p^{r-1}(p-1),k^{'})~\text{ represents}~T^{(r)}_{j}E^{(r,0)}_{\rho^{i}}.$$ \label{Common}
		\end{enumerate}
	\end{rem}
	\subsection*{Schur function:} Let $\lambda/ \mu$ be a skew shape. The skew Schur function $s_{\lambda/\mu}(x)$ of shape $\lambda/\mu$ in variables $x=(x_{1},x_{2},x_{3},\dots)$ is the formal power series $s_{\lambda/\mu}(x)=\sum_{T}x^{T}$, summed over all SSYTs T of shape $\lambda/\mu$. If $\mu = \emptyset$ so $\lambda/\mu =\lambda$, then we call $s_{\lambda}(x)$ the Schur function of shape $\lambda$ \cite{[RS]}.
	\subsection*{Cauchy identity:} 
	$$\prod_{i,j}\frac{1}{1-x_{i}y_{j}} = \sum_{\lambda}s_{\lambda(x)}s_{\lambda(y)}$$
	where $s_{\lambda}(x),~s_{\lambda}(y)$ are Schur functions of shape $\lambda$ \cite{[RS]}.
	
		\section{Bijection between the elements of $G_{r}$ (and $SG_{r}$) and the pair of standard \textbf{\textit{p}}-Young tableaux}
	Using matrix units, we develop the Robinson-Schensted correspondence between the elements of the groups $G_{r}$ and $SG_{r}$ and the pair of standard \textbf{\textit{p}}-Young tableaux using matrix units. To construct this correspondence, we assign integers to blocks of size $p^{k}(p-1),~k\geq 0$. This aids in numbering each standard \textbf{\textit{p}}-Young tableau and plays a key role in defining the bijection. We discuss a few properties of the Robinson-Schensted correspondence in our case. First, we examine the element corresponding to a pair of standard \textit{\textbf{p}}-Young tableaux $(P,Q)$ and $(Q,P)$ \cite{[BS]}. Then, we consider the Knuth correspondence and the dual Knuth correspondence \cite{[BS]}.
	\begin{Def}\label{std}
		\begin{itemize}
			\item[]
			\item The \textbf{\textit{p}}-Young diagram of the hook partitions of shape $\lambda(p^{r-1}(p-1),k)\in V^{r}_{0}(\text{or}~W^{r}_{0})$, where $0\leq k <p^{r-1}(p-1)$, consists of blocks of size $p^{n}(p-1)^{2}$, for $0\leq n\leq r-2$, following the induction and restriction defined in \cite{[TH]}  and by Definition \ref{block}.  
			
			\item The standard \textbf{\textit{p}}-Young tableau corresponding to the one-dimensional representation of shape $\lambda(p^{r-1}(p-1),k)\in V^{r}_{0}$ (or $ W^{r}_{0}$) is obtained by filling every block with the same integer.
			\item The \textbf{\textit{p}}-Young diagram of the hook partitions of shape $\lambda(p^{r-s}[2sp-(2s+1)],i)\in V^{r}_{s}$, for $1\leq s\leq r$, $\left(\text{or}~\lambda(p^{r-s-1}[(2s+1)p-2(s+1)],j)\in W^{r}_{s},~ \text{for}~0\leq s\leq r-1\right)$ consists of blocks of size $p^{r-s}(p-1)\left(\text{or}~p^{r-s-1}(p-1)\right)$ and $p^{r-s}(p-2)\left(\text{or}~p^{r-s-1}(p-2)\right)$ respectively, following the induction and restriction defined in \cite{[TH]} and by Definition \ref{block}, respectively.  
			
			In other words, every \textbf{\textit{p}}-Young tableau corresponds to the path in the Bratteli diagram that starts at $\lambda(p^{r-s}[2sp-(2s+1)],i)\in V^{r}_{s}$, for $1\leq s \leq r$, $(\text{or}~\lambda(p^{r-s-1}[(2s+1)p-2(s+1)],j)\in W_{s}^{r}$, for $1\leq s\leq r-1)$ and ends at $\lambda(p-1,k)$ for some $0\leq k<p-1$.
			
			\item The standard \textbf{\textit{p}}-Young tableau \textbf{$(\textbf{\textit{SYT}}_{\textbf{\textit{p}}})$} is a \textbf{\textit{p}}-Young diagram obtained by filling each block with a single positive integer, such that the integers strictly increase along the blocks in the first row and along the blocks in the first column. 
		\end{itemize}
	\end{Def}
		\begin{ex}
		The	$\textbf{\textit{SYT}}_{\textbf{3}}$ of shape $\lambda(6,k),$ for $0\leq k <6$ such that $\lambda(6,k)\in V^{2}_{0},$ in the group $G_{2}$  is given as follows:
		
		\ytableausetup{centertableaux,boxsize=1em}	
		$\lambda(6,0)$ = \ytableausetup{centertableaux}
		\begin{ytableau}
			1 & 1 & 1 & 1 & 1 & 1
		\end{ytableau},
		$\lambda(6,1)$ = \ytableausetup{centertableaux}
		\begin{ytableau}
			1 & 1 & 1 & 1 & 1 \\ 1
		\end{ytableau},
		$\lambda(6,2)$ = \ytableausetup{centertableaux}
		\begin{ytableau}
			1 & 1 & 1 & 1 \\ 1 \\ 1
		\end{ytableau},
	
		$\lambda(6,3)$ = \ytableausetup{centertableaux}
		\begin{ytableau}
			1 & 1 & 1 \\ 1 \\ 1 \\ 1
		\end{ytableau},
			$\lambda(6,4)$ = \ytableausetup{centertableaux}
		\begin{ytableau}
			1 & 1 \\ 1 \\ 1 \\ 1 \\ 1
		\end{ytableau},
		$\lambda(6,5)$ = \ytableausetup{centertableaux}
		\begin{ytableau}
			1 \\ 1 \\ 1 \\ 1 \\ 1 \\ 1
		\end{ytableau}
		
	\end{ex}
	\begin{ex}
		Let  $\lambda(2,1) =$ 
		\ytableausetup{centertableaux,boxsize=1.2em}
		\begin{ytableau}
			1 & 1 \\  2
		\end{ytableau} $\in V^{1}_{1},$ be a $\textbf{\textit{SYT}}_{\textbf{3}}$ in the group $G_{1}.$ Now, by adding a block $B_{2-t,t}$ with $0\leq t\leq 2$ to this $\textbf{\textit{SYT}}_{\textbf{3}}$, we obtain
		
		$\lambda(5,1)$ = \ytableausetup{centertableaux}
		\begin{ytableau}
			1 & 1 & 3 & 3 \\ 2
		\end{ytableau},
		$\lambda(5,2)$ = \ytableausetup{centertableaux}
		\begin{ytableau}
			1& 1 & 3 \\  2 \\ 3
		\end{ytableau}, 
		$\lambda(5,3)$ =
		\begin{ytableau}
			1 & 1  \\ 2  \\ 3 \\ 3
		\end{ytableau} in $W^{2}_{1}.$	
	\end{ex}			
	
	\begin{Def}\label{Definition pob}
		We assign numbers to the block types of size $p^{k}(p-1)$ as follows:
		
		The block of type $B_{\frac{p^{k}(p-1)}{2}+p^{k}m,\frac{p^{k}(p-1)}{2}-p^{k}m}$ is numbered as $2m+1$, where $0\leq m\leq \frac{p-1}{2}$.
		
		The block of type $B_{\frac{p^{k}(p-1)}{2}-p^{k}m,\frac{p^{k}(p-1)}{2}+p^{k}m}$ is numbered as $2m$, where $1\leq m\leq \frac{p-1}{2}$.

	\end{Def}
	
	\subsection{Robinson-Schensted correspondence}
	We develop the Robinson-Schensted correspondence between the pairs of $\textbf{\textit{SYT}}_{\textbf{\textit{p}}}$ and the elements of the groups $G_{r}$ and $SG_{r}$.
	\subsubsection*{Assigning integer to standard $p$-Young tableau}
	To assign an integer to every $\textbf{\textit{SYT}}_{\textbf{\textit{p}}}$, we utilize the numbering defined in Definition \ref{Definition pob}. This process involves considering two distinct cases to assign an integer value appropriately.
	
	\begin{description}
		\item[Case (a)] Consider a $\textbf{\textit{SYT}}_{\textbf{\textit{p}}}$ of shape $\lambda(2rp-(2r+1),r(p-1)-1)\in V_{r}^{r}$, corresponding to an irreducible representation of degree $p^{r-1}(p-1)$ of the group $G_{r}$.
		
		\begin{description}
			\item[Step 1] Look at the block of type $B_{p-1-t,t}$, where $0\leq t\leq p-1$, which contains the last entry (i.e., highest integer occurring in the $\textbf{\textit{SYT}}_{\textbf{\textit{p}}}$) of the $\textbf{\textit{SYT}}_{\textbf{\textit{p}}}$ of shape $\lambda(2rp-(2r+1),r(p-1)-1)\in V_{r}^{r}$. By Definition \ref{Definition pob}, this block type is numbered as $'i_{1}'$. Subsequently, the block is removed and the $\textbf{\textit{SYT}}_{\textbf{\textit{p}}}$ is reduced to a $\textbf{\textit{SYT}}_{\textbf{\textit{p}}}$ of shape $\lambda((2r-1)p-2r,k^{'})\in W_{r-1}^{r}$. After removing the last entry of the $\textbf{\textit{SYT}}_{\textbf{\textit{p}}}$ of the shape $\lambda((2r-1)p-2r,k^{'})\in W_{r-1}^{r}$, the tableau is reduced further to a $\textbf{\textit{SYT}}_{\textbf{\textit{p}}}$ of shape $\lambda((2r-2)p-(2r-1),(r-1)p-r)\in V_{r-1}^{r-1}$.
			
			\begin{figure}[!ht]
				\vspace{-0.4cm}
				\begin{center}
					\includegraphics[width=9cm, height=5cm]{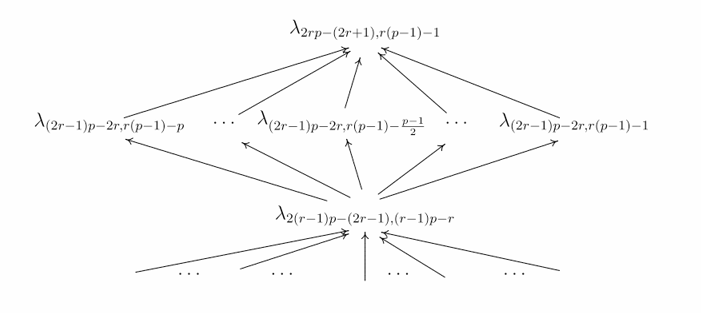}
				\end{center}
				\vspace{-0.9cm}
			\end{figure}
			
			\item[Step 2]	
			Once again consider the block of type $B_{p-1-t,t}$, where $0\leq t \leq p-1$, which contains the last entry of the reduced tableau. By Definition \ref{Definition pob}, this block type is numbered as $'i_{2}'$. This block is then removed as well.			
			This process continues by successively removing such blocks. Now consider the reduced $\textbf{\textit{SYT}}_{\textbf{\textit{p}}}$ of shape $\lambda(2(r-m+1)p-(2(r-m+1)+1),(r-m+1)(p-1)-1)\in V^{r-m+1}_{r-m+1}$, and look at the block of type $B_{p-1-t,t}$, where $0\leq t \leq p-1$, which contains the last entry of the $\textbf{\textit{SYT}}_{\textbf{\textit{p}}}$. By Definition \ref{Definition pob}, this block type is numbered as $'i_{m}'$, and it is subsequently removed. As a result, the $\textbf{\textit{SYT}}_{\textbf{\textit{p}}}$ is reduced to the $\textbf{\textit{SYT}}_{\textbf{\textit{p}}}$ of shape $\lambda((2(r-m+1)-1)p-2(r-m+1),k^{''})\in W^{r-m}_{r-m+1}$, from which the block containing the last entry is also removed. 
			
			This $\textbf{\textit{SYT}}_{\textbf{\textit{p}}}$ is reduced to $\lambda(2(r-m)p-(2(r-m)+1),(r-m)(p-1)-1)\in V^{r-m}_{r-m}$.
			Now, look at the block of type $B_{p-1-t,t},~0\leq t \leq p-1$, which contains the last entry of the $\textbf{\textit{SYT}}_{\textbf{\textit{p}}}$. By Definition \ref{Definition pob}, this block type is numbered as $'i_{m+1}'$ for $1\leq m \leq r-2$. This yields a sequence of integers  $i_{1},i_{2},\dots,i_{r-1}$, $1\leq i_{1},i_{2},\dots,i_{r-1}\leq p$.
			
			\item[Step 3] After removing all the blocks, as in the previous steps, we reach a stage where the partition has the shape $\lambda(2p-3,p-2)$ with dimension $p-1$. Consider the path from $\lambda(2p-3,p-2)$ to $\lambda(p-1,t_{1})$ by removing $t_{1}$ nodes horizontally and $p-2-t_{1}$ nodes vertically, where $0\leq t_{1}\leq p-2$. We assign numbers to these paths as follows:	
			$$\lambda(2p-3,p-2)\hookleftarrow \lambda(p-1,t_{1}) : 2t_{1}+1,~0\leq t_{1}<\frac{p-1}{2}$$
			and its conjugate is numbered as
			$$\lambda(2p-3,p-2)\hookleftarrow \lambda(p-1,p-2-t_{1}) : 2(t_{1}+1)$$
			\begin{figure}[!ht]
				\vspace{-0.5cm}
				\begin{center}
					\includegraphics[width=9cm, height=2.5cm]{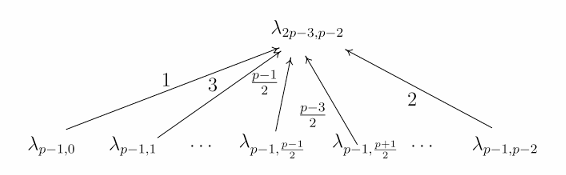}
				\end{center}
				\vspace{-0.5cm}
			\end{figure}
			
			Let the numbering be, say, $i_{r}$.
			
			\item[Step 4]	
			Let	$j=
			i_{1}+p(i_{2}-1)+p^{2}(i_{3}-1)+\dots+p^{r-2}(i_{r-1}-1)+p^{r-1}i_{r} ~ \text{for} ~ r\geq1$.
			
			Then $1\leq j\leq p^{r-1}(p-1)$. The path corresponding to the $\textbf{\textit{SYT}}_{\textbf{\textit{p}}}$ is denoted by $P_{j}$.
			
			Take a $\textbf{\textit{SYT}}_{\textbf{\textit{p}}}$ of the shape $\lambda((2r-1)p-2r,k^{'})\in W_{r-1}^{r}$, $(r-1)p-r\leq k^{'} \leq rp-(r+1)$. Let $k = (2r-1)p-2r$, and the figure below is an example by fixing $k^{'} =\frac{k-1}{2}$. For any fixed $k^{'}$, the paths are the same as those for the partition $\lambda(k-(p-1),\frac{k-(p-1)-1}{2})\in V^{r-1}_{r-1}$. First, the block that contains the last entry is removed. Now, this case is reduced to the previous case and by doing the same procedure for the reduced tableau in $V^{r-1}_{r-1}$, we obtain the following integers $i_{1},i_{2},\dots,i_{r-1}$ where $1\leq i_{1},i_{2},\dots,i_{r-2}\leq p$ and $0\leq i_{r-1}\leq p-2$.
			
			\begin{figure}[!ht]
				\vspace{-0.3cm}
				\begin{center}
					\includegraphics[width=9cm, height=3cm]{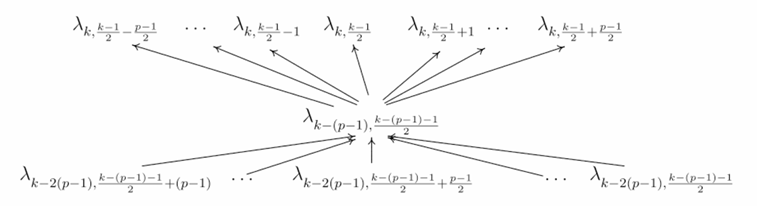}
				\end{center}
				\vspace{-0.5cm}
			\end{figure}
			
			Let $j=i_{1}+p(i_{2}-1)+p^{2}(i_{3}-1)+\dots+p^{r-3}(i_{r-2}-1)+p^{r-2}i_{r-1},~ \text{for} ~ r>1$.
			\noindent Then $1\leq j\leq p^{r-2}(p-1)$. The path corresponding to the $\textbf{\textit{SYT}}_{\textbf{\textit{p}}}$ is denoted by $P_{j}$.
		\end{description}	
		\item[Case (b)]	
		Consider a $\textbf{\textit{SYT}}_{\textbf{\textit{p}}}$ of shape $\lambda(p^{r-s}[2sp-(2s+1)],l_{1})\in V_{s}^{r}$, where $1\leq s\leq r-1$, and $p^{r-s}[sp-(s+1)]\leq l_{1}<sp^{r-s}(p-1)$, corresponding to an irreducible representation of degree $p^{s-1}(p-1)$ of the group $G_{r}$. Let us take $k=p^{r-s}[2sp-(2s+1)]$.
		\begin{description}
			\item[Step 1]	
			Let $l_{1}$ be fixed. Consider the block of type $B_{p^{r-s}(p-1-t),p^{r-s}t}$, where $0\leq t\leq p-1$, which contains the last entry of the $\textbf{\textit{SYT}}_{\textbf{\textit{p}}}$ of shape $\lambda(p^{r-s}[2sp-(2s+1)],l_{1})\in V_{s}^{r}$. By Definition \ref{Definition pob}, this block type is numbered as $'i_{1}'$. Subsequently, the block is removed. Now, this $\textbf{\textit{SYT}}_{\textbf{\textit{p}}}$ is reduced to a  $\textbf{\textit{SYT}}_{\textbf{\textit{p}}}$ of shape $\lambda(p^{r-s}[(2s-1)p-2s],l^{'}_{1})\in W_{s-1}^{r}$. After removing the block containing the last entry of the $\textbf{\textit{SYT}}_{\textbf{\textit{p}}}$ of shape $\lambda(p^{r-s}[(2s-1)p-2s],l^{'}_{1})\in W_{s-1}^{r}$, it is further reduced to a $\textbf{\textit{SYT}}_{\textbf{\textit{p}}}$ of shape $\lambda(p^{r-s}[2(s-1)p-(2s-1)],l_{2})$ in $V_{s-1}^{r-1}$.	
			\begin{figure}[!ht]
				\begin{center}
					\includegraphics[width=9cm, height=4cm]{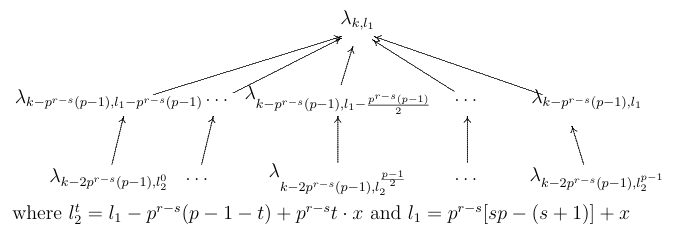}
				\end{center}
				\vspace{-0.5cm}
			\end{figure}
			
			\item[Step 2]	
			Once again, consider the block of type $B_{p^{r-s}(p-1-t),p^{r-s}t}$, where $0\leq t\leq p-1$, which contains the last entry in the reduced tableau. By Definition \ref{block}, this block type is numbered as $'i_{2}'$. This block is then removed.
			
			This process can be continued by successively removing blocks. Now, consider the reduced $\textbf{\textit{SYT}}_{\textbf{\textit{p}}}$ of shape $\lambda(p^{r-s}[2(s-m+1)p-(2(s-m+1)+1)],l_{m})\in V_{s-m+1}^{r-m+1}$. Look at the block of type $B_{p^{r-s}(p-1-t),p^{r-s}t}$, with $0\leq t\leq p-1$, which contains the last entry of the $\textbf{\textit{SYT}}_{\textbf{\textit{p}}}$. By Definition \ref{block}, this block type is numbered as $'i_{m}'$. Consequently, the block is removed. Now, this $SYT_{p}$ is reduced to the $\textbf{\textit{SYT}}_{\textbf{\textit{p}}}$ of shape $\lambda(p^{r-s}[(2(s-m+1)-1)p-2(s-m+1)],l^{'}_{m})\in W_{s-m}^{r-m+1}$, and the block containing the last entry is removed. 
			
			This $\textbf{\textit{SYT}}_{\textbf{\textit{p}}}$ is further reduced to $\lambda(p^{r-s}[2(s-m)p-2(s-m)+1],l_{m+1})\in V_{s-m}^{r-m}$. Now, look at the block of type $B_{p^{r-s}(p-1-t),p^{r-s}t}$, with $0\leq t\leq p-1$, which contains the last entry of the $\textbf{\textit{SYT}}_{\textbf{\textit{p}}}$. By Definition \ref{block}, this block type is numbered as  $'i_{m+1}'$ for $0\leq m\leq s-2$. This yields a sequence of integers
			$i_{1},i_{2},\dots,i_{s-1}$ where $1\leq i_{1},i_{2},\dots,i_{s-1}\leq p$. 
			
			\item[Step 3]	
			After removing all the blocks, we arrive at a stage where the partition takes the shape of $\lambda(p^{r-s}(2p-3),k^{'})$ with dimension $p-1$. Consider the path from $\lambda(p^{r-s}(2p-3),k^{'})$ to $\lambda(p^{r-s}(p-1),l^{'})$. We number these paths as follows:
			
			$\lambda(p^{r-s}(2p-3),k^{'})\hookleftarrow \lambda(p^{r-s}(p-1),k^{'}-p^{r-s}(p-2-t)) ~\text{numbered as}~j^{'}~\text{if}~j^{'}p^{r-s}\leq k^{'}-p^{r-s}(p-2-t)<(j^{'}+1)p^{r-s}. $
			
			\item[Step 4]	
			Let	
			
			$j=\begin{cases}
				1+j^{'}+p^{s-2}(p-1)(i_{1}-1)+\sum_{n=1}^{s-2}p^{n-1}(p-1)(i_{n+1}-1) & \text{for} ~ s>1\\
				j^{'}+1 & \text{for} ~ s=1
			\end{cases}$
			
			Then $1\leq j\leq p^{s-1}(p-1)$. The path corresponding to the $\textbf{\textit{SYT}}_{\textbf{\textit{p}}}$ is denoted by $P_{j}$.

			Consider a $\textbf{\textit{SYT}}_{\textbf{\textit{p}}}$ of shape $\lambda(p^{r-s}[(2s-1)p-2s],k^{'})\in W_{s-1}^{r}$, with $p^{r-s}[(s-1)p-s]\leq k^{'} \leq sp^{r-s}(p-1)$. Let $k = p^{r-s}[(2s-1)p-2s]$.	
			For any fixed $k^{'}$, the paths are the same as those in the partition 
			$\lambda(k-p^{r-s}(p-1),\frac{k-p^{r-s}(p-1)-1}{2})\in V^{r-1}_{s-1}$. Begin by removing the block that contains the last entry of the $\textbf{\textit{SYT}}_{\textbf{\textit{p}}}$. This case then reduces to the previous one, so we obtain the integers $i_{1},i_{2},\dots,i_{s-2}$ where $1\leq i_{1},i_{2},\dots,i_{s-2}\leq p$. Let
			
			$j=\begin{cases}
				1+j^{'}+p^{s-3}(p-1)(i_{1}-1)+\sum_{n=1}^{s-3}p^{n-1}(p-1)(i_{n+1}-1) & \text{for} ~ s>1\\
				j^{'}+1 & \text{for} ~ s=1
			\end{cases}$
			
			Then $1\leq j\leq p^{s-2}(p-1)$. The path corresponding to the $\textbf{\textit{SYT}}_{\textbf{\textit{p}}}$ is denoted by $P_{j}$.
		\end{description}	
	\end{description}
	
	\begin{Not}
		\item $\Lambda_{1} = \{V_{0}^{r}\cup V^{r}_{s}/r\geq 1,~1\leq s\leq r\}$, $\Lambda_{2} = \{W_{0}^{r}\cup W^{r}_{s}/r\geq 1,~1\leq s\leq r-1\}$
		\item $\Lambda = \Lambda_{1}\cup \Lambda_{2}$, $\tau^{*}=\tau^{\frac{p^{r-1}(p-1)}{2}}$ 
	\end{Not}
	\subsection*{Defining the mapping between the group $G_{r}$ (and $SG_{r}$) and pairs of $\textbf{\textit{SYT}}_{\textbf{\textit{p}}}$}
	Now, construct the map $h: \bigcup\limits_{\lambda \in \Lambda_{1}}\textbf{\textit{SYT}}_{\textbf{\textit{p}}}(\lambda)\times \textbf{\textit{SYT}}_{\textbf{\textit{p}}}(\lambda)\rightarrow G_{r}$. Consider an element $g\in G_{r}$ of order $p^{r}$. 
	\begin{description}
		\item[Case (a)] 	
		Let $P_{i},P_{j}\in \mathbb{T}^{p}_{\lambda}$, where $\mathbb{T}^{p}_{\lambda}$ denotes the set of all $\textbf{\textit{SYT}}_{\textbf{\textit{p}}}$ of shape $\lambda(k,\frac{k-1}{2})$, with $k=2rp-(2r+1)$. Based on the matrix units of Theorem \ref{thm 5.1}, we define $$h\bigl((P_{i},P_{j})\bigr)=\tau^{-(i-1)}g\tau^{*}\tau^{j-1},$$ where $1\leq i,j \leq p^{r-1}(p-1)$.  
		
		\item[Case (b)] For any $1\leq s\leq r-1$, we have:
		\begin{description}	
			\item[Type 1] 
			Let $P^{0}_{i},P^{0}_{j}\in \mathbb{T}^{P}_{\lambda(0)}$ where $\mathbb{T}^{p}_{\lambda(0)}$ represents the set of all $\textbf{\textit{SYT}}_{\textbf{\textit{p}}}$ of the shape $\lambda(k,\frac{k-1}{2})$, with $k=p^{r-s}[2sp-(2s+1)]$. Based on the matrix units of Theorem \ref{thm 5.1}, we define $$h\bigl((P^{0}_{i},P^{0}_{j})\bigr)=
			\tau^{-(i-1)}g^{p^{r-s}}\tau^{*}\tau^{j-1}, $$ where $1\leq i,j \leq p^{s-1}(p-1)$.
			
			\item[Type 2] Let $P^{m^{-}}_{i},P^{m^{-}}_{j}\in \mathbb{T}^{p}_{\lambda(m^{-})}$, where $\mathbb{T}^{p}_{\lambda(m^{-})}$ denotes the set of all $\textbf{\textit{SYT}}_{\textbf{\textit{p}}}$ of shape $\lambda(k,\frac{k-1}{2}-m)$, with $k=p^{r-s}[2sp-(2s+1)]$ and $1\leq m \leq \frac{p^{r-s}-1}{2}$. 
			Based on the matrix units of Theorem \ref{thm 5.1}, we define
			$$h\bigl((P^{m^{-}}_{i},P^{m^{-}}_{j})\bigr)=\tau^{-(i-1)}g^{p^{r-s}}\tau^{*}\tau^{j-1}\tau^{lp^{s-1}(p-1)},$$
			where $1\leq i,j \leq p^{s-1}(p-1)$, and $T^{(r,s)}_{l}E^{(r,s)}_{\rho^{j^{'}}}$ is the primitive idempotent indexed by the hook partition $\lambda(k,\frac{k-1}{2}-m)$ as in Remark \ref{rem1} (\ref{G}).
			
			\item[Type 3] Let $P^{m^{+}}_{i},P^{m^{+}}_{j}\in \mathbb{T}^{p}_{\lambda(m^{+})}$, where $\mathbb{T}^{p}_{\lambda(m^{+})}$ represents the set of all $\textbf{\textit{SYT}}_{\textbf{\textit{p}}}$ of the shape $\lambda(k,\frac{k-1}{2}+m)$, with $k=p^{r-s}[2sp-(2s+1)]$ and $1\leq m \leq \frac{p^{r-s}-1}{2}$.
			Based on the matrix units of Theorem \ref{thm 5.1}, we get $$h\bigl((P^{m^{+}}_{i},P^{m^{+}}_{j})\bigr)=	\tau^{-(i-1)}g^{p^{r-s}}\tau^{*}\tau^{j-1}\tau^{lp^{s-1}(p-1)}, $$
			where $1\leq i,j \leq p^{s-1}(p-1)$, and $T^{(r,s)}_{l}E^{(r,s)}_{\rho^{j^{'}}}$ is the primitive idempotent indexed by the hook partition $\lambda(k,\frac{k-1}{2}+m)$ as in Remark \ref{rem1} (\ref{G}).
		\end{description}
	\end{description}	
	\begin{thm}
		The map $h: \bigcup\limits_{\lambda \in \Lambda_{1}}\textbf{\textit{SYT}}_{\textbf{\textit{p}}}(\lambda)\times \textbf{\textit{SYT}}_{\textbf{\textit{p}}}(\lambda)\rightarrow G_{r}$ establishes a bijection between the pairs of standard $p$-Young tableaux of the same shape and elements in the group $G_{r}$.
	\end{thm}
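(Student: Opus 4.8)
The plan is to reduce the statement to a cardinality count together with an injectivity verification, organized around the block structure of the semisimple algebra $KG_r$. Since the ground field has characteristic zero, $KG_r$ is semisimple, so the matrix units of Theorem \ref{thm 5.1} (for the representations in $V^r_s$, $1\le s\le r$) together with the primitive idempotents of the one-dimensional representations in $V^r_0$ form a $K$-basis of $KG_r$; consequently $|G_r|=\sum_{\lambda\in\Lambda_1}d_\lambda^2$, where $d_\lambda$ denotes the degree of the representation indexed by $\lambda$. If the set $\mathbb{T}^p_\lambda$ of standard $p$-Young tableaux of shape $\lambda$ satisfies $|\mathbb{T}^p_\lambda|=d_\lambda$, then the domain $\bigcup_{\lambda\in\Lambda_1}\mathbb{T}^p_\lambda\times\mathbb{T}^p_\lambda$ has exactly $|G_r|$ elements, and the theorem reduces to proving that $h$ is injective.

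First I would establish $|\mathbb{T}^p_\lambda|=d_\lambda$, which is precisely what the integer-assignment of Steps 1--4 encodes. Stripping from a tableau of shape $\lambda\in V^r_s$ the successive blocks $B_{p^{r-s}(p-1-t),\,p^{r-s}t}$ carrying its highest entry traces a path down the $p$-Bratteli diagram to a bottom shape $\lambda(p^{r-s}(p-1),\cdot)$, and records a digit string whose admissible ranges multiply to $p^{s-1}(p-1)=d_\lambda$; the mixed-radix formula of Step 4 then bijects these strings with $\{1,\dots,d_\lambda\}$, so that every tableau receives a unique index and every index is attained once. Since the number of downward paths in a Bratteli diagram equals the dimension of the starting module, this gives $|\mathbb{T}^p_\lambda|=d_\lambda$. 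Substituting $d_\lambda=1$ for the $p^{r-1}(p-1)$ shapes of $V^r_0$ and $d_\lambda=p^{s-1}(p-1)$ for the $p^{r-s}$ shapes of each $V^r_s$ and summing the resulting geometric series recovers $\sum_\lambda d_\lambda^2=(p-1)p^{2r-1}=|G_r|$, confirming the count and showing at the same time that $h$ is well defined on indices.

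For injectivity I would stratify $G_r$ by the $p$-adic valuation of the $g$-part (write $v_p$ for that valuation). Using the semidirect-product relation $\tau g\tau^{-1}=g^{\omega}$ with $\omega$ a primitive root modulo $p^r$, every value of $h$ can be rewritten in the normal form $g^a\tau^b$; conjugation by a power of $\tau$ multiplies $a$ by a unit, so $v_p(a)$ is preserved. In Case (a) one gets $g^{\omega^{-(i-1)}}\tau^b$ with $v_p(a)=0$, and in Case (b) the factor $g^{p^{r-s}}$ forces $v_p(a)=r-s$; writing $\Sigma^{(s)}=\{g^a\tau^b:v_p(a)=r-s\}$, the pairs coming from $V^r_s$ land in $\Sigma^{(s)}$, while the one-dimensional representations of $V^r_0$ fill $\Sigma^{(0)}=\{\tau^b\}$ (the case $a=0$). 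A direct count gives $|\Sigma^{(s)}|=p^{s-1}(p-1)\cdot p^{r-1}(p-1)=p^{r-s}\,d_\lambda^2$, matching stratum by stratum the number of pairs produced from $V^r_s$; since the strata are disjoint and exhaust $G_r$, it suffices to check injectivity on each $\Sigma^{(s)}$ separately.

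On a fixed stratum the normal form decouples the two indices from the shape: the $g$-exponent $p^{r-s}\omega^{-(i-1)}$ runs bijectively over the valuation-$(r-s)$ residues as $i$ varies, so $i$ is recovered from $a$; for fixed $i$ the $\tau$-exponent carries $(j-1)$ as its ``fine'' part and the shift $l\,p^{s-1}(p-1)$ from Types 2--3 as its ``coarse'' part, and because $p^{s-1}(p-1)\cdot p^{r-s}=p^{r-1}(p-1)$ the pair $(j,l)$ sweeps every $\tau$-exponent exactly once. The main obstacle is this last ingredient: one must prove that the shape-to-$l$ rule dictated by Remark \ref{rem1}\,(\ref{G}), namely $l=(p-1)\sum_{i=1}^{r-s}p^{r-s-i}\nu(j_i)$ read modulo $p^{r-s}$, is a bijection from the $p^{r-s}$ shapes of $V^r_s$ onto $\{0,1,\dots,p^{r-s}-1\}$. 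This in turn reduces to two facts---that $\nu$ converts the signed hook-index digits $\pm p^{r-s-i}j_i$ into honest base-$p$ digits in $\{0,\dots,p-1\}$, and that multiplication by $p-1$ is invertible modulo $p^{r-s}$ since $\gcd(p-1,p)=1$. Granting this bijection, the coarse/fine decomposition tiles each $\Sigma^{(s)}$ without overlap or gap, so $h$ is injective on every stratum; combined with the cardinality equality this makes $h$ a bijection, with surjectivity following for free.
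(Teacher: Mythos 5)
Your proposal is correct in substance, but it organizes the argument in a genuinely different way from the paper. The paper's proof goes surjectivity-first: it takes an arbitrary $g^{m}\tau^{k}\in G_{r}$, factors $m=p^{r-s}t^{i}$ and $k=lp^{s-1}(p-1)+k^{'}$, and rewrites the element in the normal form $\tau^{-(i-1)}g^{p^{r-s}}\tau^{*}\tau^{j-1}\tau^{lp^{s-1}(p-1)}$ with $j=i+\frac{p^{r-1}(p-1)}{2}+k^{'}$, thereby exhibiting an explicit preimage pair $(P_{i+1},P_{j+1})$, the one-dimensional case $\tau^{j}\mapsto(P_{j},P_{j})$ being handled separately via Remark \ref{rem1}; injectivity and the underlying cardinality identity are left implicit. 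You instead prove injectivity plus equal cardinality and get surjectivity for free: you make explicit the Wedderburn count $\sum_{\lambda}d_{\lambda}^{2}=p^{2r-1}(p-1)=|G_{r}|$, the identification $|\mathbb{T}^{p}_{\lambda}|=d_{\lambda}$ via Bratteli paths, and the stratification of $G_{r}$ by the $p$-adic valuation of the $g$-exponent, which is exactly the decomposition $m=p^{r-s}t^{i}$ that the paper uses, read stratum by stratum. The two arguments share their engine (the normal form coming from the matrix units of Theorem \ref{thm 5.1} and the discrete logarithm with respect to the primitive root acting in the semidirect product), so neither is more general; the paper's version is shorter and hands you the inverse map directly, while yours isolates and justifies the one genuinely nontrivial verification that the paper passes over in silence, namely that the shape-to-$l$ rule of Remark \ref{rem1} is a bijection from the $p^{r-s}$ shapes of $V^{r}_{s}$ onto $\{0,\dots,p^{r-s}-1\}$; your reduction of this to the digit conversion $\nu$ together with the invertibility of $p-1$ modulo $p^{r-s}$ is the right way to close that gap, and is worth writing out in full if this argument is adopted.
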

	\begin{proof}
		Let $g^{m}\tau^{k}\in G_{r}$. There exist $i$ and $k^{'}$ such that $m=p^{r-s}t^{i}$ and $k=lp^{s-1}(p-1)+k^{'}$ where $1\leq i\leq p^{s-1}(p-1)$, $1\leq k^{'}\leq p^{s-1}(p-1)$ and $1\leq s\leq r$. Now we have 
		\begin{align*}
			g^{m}\tau^{k}&= g^{p^{r-s}t^{i}}\tau^{lp^{s-1}(p-1)+k^{'}}=\tau^{-i}g^{p^{r-s}}\tau^{*}\tau^{i}\tau^{lp^{s-1}(p-1)+k^{'}}\tau^{\frac{p^{r-1}(p-1)}{2}}\\ &=\tau^{-i}g^{p^{r-s}}\tau^{*}\tau^{i+\frac{p^{r-1}(p-1)}{2}+k^{'}}\tau^{lp^{s-1}(p-1)}=\tau^{-((i+1)-1)}g^{p^{r-s}}\tau^{*}\tau^{(j+1)-1}\tau^{lp^{s-1}(p-1)}
		\end{align*}
		where $j=i+\frac{p^{r-1}(p-1)}{2}+k^{'}$. Let $P_{i+1}$ be a $\textbf{\textit{SYT}}_{\textbf{\textit{p}}}$ indexed by the integer numbering $i+1$, and $P_{j+1}$ be a $\textbf{\textit{SYT}}_{\textbf{\textit{p}}}$ indexed by $j+1$. Hence, the pair of $\textbf{\textit{SYT}}_{\textbf{\textit{p}}}$ $(P_{i+1},P_{j+1})$ is the inverse image of $g^{m}\tau^{k}$. 
		
		Consider the pair of tableaux $(P_{j},P_{j})$ of the same shape $\lambda(p^{r-1}(p-1),k^{'}) \in V^{r}_{0}$. This pair corresponds to the unique element $\tau^{j}$, where the relation between $k^{'}$ and $j$ is described in Remark \ref{rem1} (\ref{Common}). Hence, the bijection follows.
	\end{proof}
		\begin{ex}
		Now we discuss the integer numbering for the $\textbf{\textit{SYT}}_{\textbf{3}}$ of shape $\lambda(11,5)\in V^{3}_{3}$ in the group $G_{3}.$ 
		\begin{align*}
			\ytableausetup{boxsize =1em}
			P_{1} &=\begin{ytableau}
				1 & 1 & 3 & 4 & 5 & 6   \\ 2 \\ 3  \\ 4  \\ 5 \\ 6 
			\end{ytableau} & 	P_{2} &=\begin{ytableau}
				1 & 1 & 3 & 4 & 5 & 5  \\ 2 \\ 3  \\ 4  \\ 6 \\ 6 
			\end{ytableau} &
			P_{3} &=\begin{ytableau}
				1 & 1 & 3 & 4 & 6 & 6  \\ 2 \\ 3  \\ 4  \\ 5 \\ 5
			\end{ytableau} 
					\end{align*}
		$i_{1}=1,~i_{2}=1,~i_{3}=1\hspace*{2cm}i_{1}=2,~i_{2}=1,~i_{3}=1\hspace*{2cm}i_{1}=3,~i_{2}=1,~i_{3}=1$
		
		\noindent$j=1+3\cdot 0+3^{2}\cdot 0 =1\hspace*{1.2cm}j=2+3\cdot 0+3^{2}\cdot 0 =2\hspace*{1.2cm}j=3+3\cdot 0+3^{2}\cdot 0 =3$
		
		$(P_{1},P_{1})\mapsto g\tau^{9}\hspace*{3cm}(P_{2},P_{2})\mapsto g^{2}\tau^{9}\hspace*{3cm}(P_{3},P_{3})\mapsto g^{4}\tau^{9}$
		
		\begin{align*}
			\ytableausetup{boxsize =1em}
			P_{4} &=\begin{ytableau}
				1 & 1 & 3 & 3 & 5 & 6   \\ 2 \\ 4  \\ 4  \\ 5 \\ 6 
			\end{ytableau} &
			P_{5} &=\begin{ytableau}
				1 & 1 & 3 & 3 & 5 & 5 \\ 2 \\ 4  \\ 4  \\ 6 \\ 6 
			\end{ytableau} & 
			P_{6} &=\begin{ytableau}
				1 & 1 & 3 & 3 & 6 & 6  \\ 2 \\ 4 \\ 4  \\ 5 \\ 5 
			\end{ytableau} 
		\end{align*}
		$i_{1}=1,~i_{2}=2,~i_{3}=1\hspace*{2cm}i_{1}=2,~i_{2}=2,~i_{3}=1\hspace*{2cm}i_{1}=3,~i_{2}=2,~i_{3}=1$
		
		$(P_{4},P_{4})\mapsto g^{8}\tau^{9}\hspace*{3cm}(P_{5},P_{5})\mapsto g^{16}\tau^{9}\hspace*{3cm}(P_{6},P_{6})\mapsto g^{5}\tau^{9}$
		\begin{align*}
			\ytableausetup{boxsize =1em}
			P_{7} &=\begin{ytableau}
				1 & 1 & 4 & 4 & 5 & 6   \\ 2 \\ 3  \\ 3  \\ 5 \\ 6
			\end{ytableau} & 
			P_{8} &=\begin{ytableau}
				1 & 1 & 4 & 4 & 5 & 5  \\ 2 \\ 3  \\ 3  \\ 6 \\ 6 
			\end{ytableau} &
			P_{9} &=\begin{ytableau}
				1 & 1 & 4 & 4 & 6 & 6  \\ 2 \\ 3  \\ 3 \\ 5 \\ 5 
			\end{ytableau} 
		\end{align*}
		$i_{1}=1,~i_{2}=3,~i_{3}=1\hspace*{2cm}i_{1}=2,~i_{2}=3,~i_{3}=1\hspace*{2cm}i_{1}=3,~i_{2}=3,~i_{3}=1$
		
		$(P_{7},P_{7})\mapsto g^{10}\tau^{9}\hspace*{3cm}(P_{8},P_{8})\mapsto g^{20}\tau^{9}\hspace*{3cm}(P_{9},P_{9})\mapsto g^{13}\tau^{9}$
		
		\begin{align*}	
			\ytableausetup{boxsize =1em}
			P_{10} &=\begin{ytableau}
				1 & 2 & 3 & 4 & 5 & 6   \\ 1 \\ 3  \\ 4  \\ 5 \\ 6 
			\end{ytableau} & 
			P_{11} &=\begin{ytableau}
				1 & 2 & 3 & 4 & 5 & 5  \\ 1 \\ 3  \\ 4  \\ 6 \\ 6 
			\end{ytableau} &
			P_{12} &=\begin{ytableau}
				1 & 2 & 3 & 4 & 6 & 6  \\ 1 \\ 3  \\ 4  \\ 5 \\ 5
			\end{ytableau} 
		\end{align*}
		$i_{1}=1,~i_{2}=1,~i_{3}=2\hspace*{2cm}i_{1}=2,~i_{2}=1,~i_{3}=2\hspace*{2cm}i_{1}=3,~i_{2}=1,~i_{3}=2$
		
		$(P_{10},P_{10})\mapsto g^{26}\tau^{9}\hspace*{2.5cm}(P_{11},P_{11})\mapsto g^{25}\tau^{9}\hspace*{2.5cm}(P_{12},P_{12})\mapsto g^{23}\tau^{9}$
		
		\begin{align*}
			\ytableausetup{boxsize =1em}
			P_{13} &=\begin{ytableau}
				1 & 2 & 3 & 3 & 5 & 6   \\ 1 \\ 4  \\ 4  \\ 5 \\ 6 
			\end{ytableau} &
			P_{14} &=\begin{ytableau}
				1 & 2 & 3 & 3 & 5 & 5 \\ 1 \\ 4  \\ 4  \\ 6 \\ 6 
			\end{ytableau} & 
			P_{15} &=\begin{ytableau}
				1 & 2 & 3 & 3 & 6 & 6  \\ 1 \\ 4 \\ 4  \\ 5 \\ 5 
			\end{ytableau} 
		\end{align*}
		$i_{1}=1,~i_{2}=2,~i_{3}=2\hspace*{2cm}i_{1}=2,~i_{2}=2,~i_{3}=2\hspace*{2cm}i_{1}=3,~i_{2}=2,~i_{3}=2$
		
		$(P_{13},P_{13})\mapsto g^{19}\tau^{9}\hspace*{2.5cm}(P_{14},P_{14})\mapsto g^{11}\tau^{9}\hspace*{2.5cm}(P_{15},P_{15})\mapsto g^{22}\tau^{9}$
		
		\begin{align*}
			\ytableausetup{boxsize =1em}
			P_{16} &=\begin{ytableau}
				1 & 2 & 4 & 4 & 5 & 6   \\ 1 \\ 3  \\ 3  \\ 5 \\ 6
			\end{ytableau} & 
			P_{17} &=\begin{ytableau}
				1 & 2 & 4 & 4 & 5 & 5  \\ 1 \\ 3  \\ 3  \\ 6 \\ 6 
			\end{ytableau} &
			P_{18} &=\begin{ytableau}
				1 & 2 & 4 & 4 & 6 & 6  \\ 1 \\ 3  \\ 3 \\ 5 \\ 5 
			\end{ytableau} 
		\end{align*}
		$i_{1}=1,~i_{2}=3,~i_{3}=3\hspace*{2cm}i_{1}=2,~i_{2}=3,~i_{3}=3\hspace*{2cm}i_{1}=3,~i_{2}=3,~i_{3}=3$
		
		$(P_{16},P_{16})\mapsto g^{17}\tau^{9}\hspace*{2.5cm}(P_{17},P_{17})\mapsto g^{7}\tau^{9}\hspace*{2.5cm}(P_{18},P_{18})\mapsto g^{14}\tau^{9}$
	\end{ex}
	
	Now we construct the map $h^{'}: \bigcup\limits_{\lambda \in \Lambda_{2}}\textbf{\textit{SYT}}_{\textbf{\textit{p}}}(\lambda)\times \textbf{\textit{SYT}}_{\textbf{\textit{p}}}(\lambda)\rightarrow SG_{r}$. Consider an element $g^{p}\in SG_{r}$ of order $p^{r-1}$.
	
	For any $1\leq s \leq r-2$, we have
	
	\noindent\textbf{Type 1:} Let $P^{0}_{i},P^{0}_{j}\in \mathbb{T}^{p}_{\lambda(0)}$, where $\mathbb{T}^{p}_{\lambda(0)}$ denotes the set of all $\textbf{\textit{SYT}}_{\textbf{\textit{p}}}$ of shape $\lambda(k,\frac{k-1}{2})$, with $k=(2s+1)p-2(s+1)$. Based on the matrix units of Theorem \ref{thm 5.3}, we define $$h^{'}\bigl((P^{0}_{i},P^{0}_{j})\bigr)=
	\tau^{-(i-1)}(g^{p})^{p^{r-s-1}}\tau^{*}\tau^{j-1},$$ where $~1\leq i,j \leq p^{s-1}(p-1)$.
	
	\noindent\textbf{Type 2:} 
	Let $P^{m^{-}}_{i},P^{m^{-}}_{j}\in \mathbb{T}^{p}_{\lambda(m^{-})}$, where $\mathbb{T}^{p}_{\lambda(m^{-})}$ represents the set of all $\textbf{\textit{SYT}}_{\textbf{\textit{p}}}$ of the shape $\lambda(k,\frac{k-1}{2}-m)$, with $k=(2s+1)p-2(s+1)$ and $1\leq m\leq \frac{p^{r-s}-1}{2}$. Based on the matrix units of Theorem \ref{thm 5.3}, we get $$h^{'}\bigl((P^{m^{-}}_{i},P^{m^{-}}_{j})\bigr)=
	\tau^{-(i-1)}(g^{p})^{p^{r-s-1}}\tau^{*}\tau^{j-1}\tau^{lp^{s-1}(p-1)},$$ where $1\leq i,j \leq p^{s-1}(p-1)$, and $T^{(r,s)}_{l}\mathbb{E}^{(r,s)}_{\rho^{j^{'}}}$ is the primitive idempotent indexed by the hook partition $\lambda(k,\frac{k-1}{2}-m)$ as in Remark \ref{rem1} (\ref{SG}).
	
	\noindent\textbf{Type 3:} Let $P^{m^{+}}_{i},P^{m^{+}}_{j}\in \mathbb{T}^{p}_{\lambda(m^{+})}$, where $\mathbb{T}^{p}_{\lambda(m^{+})}$ represents the set of all $\textbf{\textit{SYT}}_{\textbf{\textit{p}}}$ of the shape $\lambda(k,\frac{k-1}{2}+m)$, with $k=(2s+1)p-2(s+1)$ and $0\leq m\leq \frac{p^{r-s}-1}{2}$. Based on the matrix units of Theorem \ref{thm 5.3}, we get $$h^{'}\bigl((T^{p}_{\lambda(m^{+})},S^{p}_{\lambda(m^{+})})\bigr)=
	\tau^{-(i-1)}(g^{p})^{p^{r-s-1}}\tau^{*}\tau^{j-1}\tau^{lp^{s-1}(p-1)}$$
	where $1\leq i,j \leq p^{s-1}(p-1)$, and $T^{(r,s)}_{l}\mathbb{E}^{(r,s)}_{\rho^{j^{'}}}$ is the primitive idempotent indexed by the hook partition $\lambda(k,\frac{k-1}{2}+m)$ as in Remark \ref{rem1} (\ref{SG}).
	\begin{thm}
		The map $h^{'}: \bigcup\limits_{\lambda \in \Lambda_{2}}\textbf{\textit{SYT}}_{\textbf{\textit{p}}}(\lambda)\times \textbf{\textit{SYT}}_{\textbf{\textit{p}}}(\lambda)\rightarrow SG_{r}$ establishes the bijection between pairs of standard $p$-Young tableaux of the same shape and element in a group $SG_{r}$.
	\end{thm}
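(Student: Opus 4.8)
The plan is to mirror the proof of the preceding theorem for $h$, adapting every step to the group $SG_{r}$, its subgroup structure, and the matrix units of Theorem~\ref{thm 5.3}. The essential point is that $SG_{r}\cong \mathbb{Z}_{p^{r-1}}\rtimes\mathbb{Z}^{*}_{p^{r}}$ is generated by the element $g^{p}$ of order $p^{r-1}$ together with $\tau$ of order $p^{r-1}(p-1)$, so a generic element can be written as $(g^{p})^{m}\tau^{k}$. First I would take an arbitrary element $(g^{p})^{m}\tau^{k}\in SG_{r}$ and, exactly as in the $G_{r}$ case, decompose the exponents: there exist indices $i$ and $k'$ and an integer $l$ with $m=p^{r-s-1}t^{i}$ and $k=lp^{s-1}(p-1)+k'$, where $1\leq i\leq p^{s-1}(p-1)$, $1\leq k'\leq p^{s-1}(p-1)$ and $1\leq s\leq r-1$. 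This decomposition is possible because of the prescribed orders of $g^{p}$ and $\tau$ in $SG_{r}$ and because $\gcd$-type conditions on the exponents partition the element according to the irreducible block it belongs to.

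Next I would perform the same algebraic rewriting that was used for $h$, inserting $\tau^{*}=\tau^{\frac{p^{r-1}(p-1)}{2}}$ to bring the expression into the canonical matrix-unit form dictated by Theorem~\ref{thm 5.3}. Concretely, I would compute
\begin{align*}
(g^{p})^{m}\tau^{k}
&=(g^{p})^{p^{r-s-1}t^{i}}\tau^{lp^{s-1}(p-1)+k'}
=\tau^{-i}(g^{p})^{p^{r-s-1}}\tau^{*}\tau^{\,i+\frac{p^{r-1}(p-1)}{2}+k'}\tau^{lp^{s-1}(p-1)}\\
&=\tau^{-((i+1)-1)}(g^{p})^{p^{r-s-1}}\tau^{*}\tau^{(j+1)-1}\tau^{lp^{s-1}(p-1)},
\end{align*}
where $j=i+\frac{p^{r-1}(p-1)}{2}+k'$. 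Comparing this with the three defining cases (Type~1, Type~2, Type~3) of $h'$, the pair $(P^{\,\cdot}_{i+1},P^{\,\cdot}_{j+1})$ of the appropriate type is the inverse image of $(g^{p})^{m}\tau^{k}$. Here the conjugation $\tau^{-(i-1)}(\cdot)\tau^{(i-1)}$ in the matrix units $\mathbb{E}^{(s_r,l)}_{ij}$ of Theorem~\ref{thm 5.3} is precisely what guarantees that the integer labels $i+1$ and $j+1$ index genuine standard $p$-Young tableaux via the numbering of Definition~\ref{Definition pob}, and Remark~\ref{rem1}~(\ref{SG}) supplies the correspondence between the hook partition $\lambda(k,\frac{k-1}{2}\pm m)$ and the idempotent $T^{(r,s)}_{l}\mathbb{E}^{(r,s)}_{\rho^{j'}}$, fixing the value of $l$. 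For the one-dimensional representations I would treat the diagonal pairs $(P_{j},P_{j})$ of shape $\lambda(p^{r-1}(p-1),k')\in W^{r}_{0}$ separately, sending each to the unique element $\tau^{j}$ via Remark~\ref{rem1}~(\ref{Common}), which accounts for the elements of $SG_{r}$ lying in the image of $\tau$ alone.

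Having shown surjectivity by exhibiting an explicit preimage for every element, I would finish by a counting/injectivity argument: the map is well defined because the matrix-unit expressions are uniquely determined, and the decomposition $(i,k',s,l,m)$ of each group element is unique, so distinct elements cannot share a preimage; conversely summing $\sum_{\lambda\in\Lambda_{2}}(\dim\lambda)^{2}=|SG_{r}|$ shows the domain and codomain have equal cardinality, whence the injective, surjective map is a bijection. The main obstacle I anticipate is purely bookkeeping rather than conceptual: I must verify that for every $s$ with $1\leq s\leq r-1$ the shifted index $j=i+\frac{p^{r-1}(p-1)}{2}+k'$ stays within the valid range $1\leq j\leq p^{s-1}(p-1)$ modulo the order of $\tau$, and that the three Types together with the $W^{r}_{0}$ case exhaust $SG_{r}$ without overlap. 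Checking this exhaustiveness—confirming that the ranges of $m$ in Types~2 and~3 together with $s$ correctly enumerate every hook partition in $\Lambda_{2}$ dictated by the indexing set $W^{r}_{s}$—is where the real care is needed, since the dimension count $p^{s-1}(p-1)$ and the constraint $1\leq s\leq r-1$ (rather than $1\leq s\leq r$) differ from the $G_{r}$ case and must be matched against the orders arising from the subgroup $\mathbb{Z}_{p^{r-1}}$.
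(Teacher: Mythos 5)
Your proposal follows essentially the same route as the paper's own proof: decompose $(g^{p})^{m}\tau^{k}$ via $m=p^{r-s-1}t^{i}$ and $k=lp^{s-1}(p-1)+k'$, insert $\tau^{*}$ to reach the canonical matrix-unit form of Theorem \ref{thm 5.3}, read off the pair $(P_{i+1},P_{j+1})$, and treat the one-dimensional shapes in $W^{r}_{0}$ separately via $(P_{j},P_{j})\mapsto\tau^{j}$. Your added injectivity/cardinality check $\sum_{\lambda\in\Lambda_{2}}(\dim\lambda)^{2}=|SG_{r}|$ is a sensible strengthening of a step the paper leaves implicit, but it does not change the argument's structure.
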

	\begin{proof}
		Let $g^{pm}\tau^{k}\in SG_{r}$. There exist $i$ and $k^{'}$ such that $m=p^{r-s-1}t^{i}$ and $k=lp^{s-1}(p-1)+k^{'}$, where $1\leq i\leq p^{s-1}(p-1)$, $1\leq k^{'}\leq p^{s-1}(p-1)$ and $1\leq s\leq r-1$. Now we have 
		\begin{align*}
			g^{pm}\tau^{k}&= (g^{p})^{p^{r-s-1}t^{i}}\tau^{lp^{s-1}(p-1)+k^{'}}=\tau^{-i}g^{p^{r-s}}\tau^{*}\tau^{i}\tau^{lp^{s-1}(p-1)+k^{'}}\tau^{\frac{p^{r-1}(p-1)}{2}}\\ &=\tau^{-i}g^{p^{r-s}}\tau^{*}\tau^{i+\frac{p^{r-1}(p-1)}{2}+k^{'}}\tau^{lp^{s-1}(p-1)}=\tau^{-((i+1)-1)}g^{p^{r-s}}\tau^{*}\tau^{(j+1)-1}\tau^{lp^{s-1}(p-1)}	
		\end{align*}
		where $j=i+\frac{p^{r-1}(p-1)}{2}+k^{'}$. Let $P_{i+1}$ be a $\textbf{\textit{SYT}}_{\textbf{\textit{p}}}$ indexed by the integer $i+1$ and $P_{j+1}$ be a $\textbf{\textit{SYT}}_{\textbf{\textit{p}}}$ indexed by $j+1$. Hence, the pair of $\textbf{\textit{SYT}}_{\textbf{\textit{p}}}$ $(P_{i+1},P_{j+1})$ is the inverse image of $g^{pm}\tau^{k}$. 			
		
		Consider the pair of tableaux $(P_{j},P_{j})$ of the same shape $\lambda(p^{r-1}(p-1),k^{'}) \in W^{r}_{0}$. This pair corresponds to the unique element $\tau^{j}$ where the relation between $k^{'}$ and $j$ is described in Remark \ref{rem1} (\ref{Common}). Hence, the bijection follows.
	\end{proof}
		\begin{ex}
		Let us discuss the matrix representation of the irreducible representation indexed by the hook partition $\lambda(15,k)\in W^{3}_{1},$ where $3\leq k\leq 11.$ 
		Here, $k$ and $l$ are related as described in Remark \ref{rem1} (\ref{SG}).
		
		\noindent	$\begin{pmatrix}
			g^{9}\tau^{9} & g^{9}\tau^{10}\\
			g^{18}\tau^{8}&g^{18}\tau^{9} 
		\end{pmatrix},$
		$\begin{pmatrix}
			g^{9}\tau^{17} & g^{9} \\
			g^{18}\tau^{16}&g^{18}\tau^{17} 
		\end{pmatrix},$
		$\begin{pmatrix}
			g^{9}\tau & g^{9}\tau^{2} \\
			g^{18} &g^{18}\tau 
		\end{pmatrix},$
		$\begin{pmatrix}
			g^{9}\tau^{15} & g^{9}\tau^{16} \\
			g^{18}\tau^{14}&g^{18}\tau^{15} 
		\end{pmatrix},$~ 
		$\begin{pmatrix}
			g^{9}\tau^{5} & g^{9}\tau^{6} \\
			g^{18}\tau^{6}&g^{18}\tau^{5} 
		\end{pmatrix}$
		\\	\hspace*{0.5cm}$k=7, l=0 \hspace*{1cm} k=6, l=4 \hspace*{1.2cm} k=5, l=5 \hspace*{0.8cm} k=4, l=3\hspace*{1.4cm} k=3,l=7$
		\item$\begin{pmatrix}
			g^{9}\tau^{13} & g^{9}\tau^{14} \\
			g^{18}\tau^{12}&g^{18}\tau^{13} 
		\end{pmatrix},$
		$\begin{pmatrix}
			g^{9}\tau^{11} & g^{9}\tau^{12} \\
			g^{18}\tau^{10}&g^{18}\tau^{11} 
		\end{pmatrix},$
		$\begin{pmatrix}
			g^{9}\tau^{3} & g^{9}\tau^{4} \\
			g^{18}\tau^{2}&g^{18}\tau^{3} 
		\end{pmatrix}, $
		$\begin{pmatrix}
			g^{9}\tau^{7} & g^{9}\tau^{8} \\
			g^{18}\tau^{6}&g^{18}\tau^{7} 
		\end{pmatrix}$
		\\	\hspace*{0.5cm}$k=8, l=2 \hspace*{1.2cm} k=9, l=1 \hspace*{1.4cm} k=10, l=6 \hspace*{0.8cm} k=11, l=8$
	\end{ex}
	
	\subsection{Properties of Robinson-Schensted correspondence}
	\begin{thm}\label{prop 1}
		If $\tau^{-(i-1)}g^{p^{r-s}}\tau^{*}\tau^{j-1}\tau^{lp^{s-1}(p-1)}\in G_{r}~(\text{and}~SG_{r})$ corresponds to the pair of tableaux $(P_{i},P_{j})$, then $\tau^{-(j-1)}g^{p^{r-s}}\tau^{*}\tau^{i-1}\tau^{lp^{s-1}(p-1)}\in G_{r}~(\text{and}~SG_{r})$ corresponds to the pair of tableaux $(P_{j},P_{i})$, where $1\leq i,j\leq p^{s-1}(p-1),~0\leq l\leq p^{r-s}-1$, and $1\leq s\leq r$ (or $1\leq s\leq r-1$).
	\end{thm}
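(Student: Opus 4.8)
The plan is to read the assertion directly off the definition of the maps $h$ and $h'$, the only substantive point being that the exponent $l$ is an invariant of the shape and is therefore unchanged when the two tableaux of a pair are interchanged. First I would recall that, for a pair $(P_a,P_b)$ of $\textbf{\textit{SYT}}_{\textbf{\textit{p}}}$ of common shape $\lambda(k,\tfrac{k-1}{2}\pm m)$ with $k=p^{r-s}[2sp-(2s+1)]$, the map is given (Case (b), Types 1--3) by
$$h\bigl((P_a,P_b)\bigr)=\tau^{-(a-1)}g^{p^{r-s}}\tau^{*}\tau^{b-1}\tau^{lp^{s-1}(p-1)},$$
where $l$ is the integer attached by Remark \ref{rem1} (\ref{G}) to the hook partition indexing that shape, namely the index of the primitive idempotent $T^{(r,s)}_{l}E^{(r,s)}_{\rho^{j'}}$ of Theorem \ref{thm 5.1}. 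The hypothesis fixes two tableaux $P_i,P_j$ of this common shape, so $(P_i,P_j)$ is sent to $\tau^{-(i-1)}g^{p^{r-s}}\tau^{*}\tau^{j-1}\tau^{lp^{s-1}(p-1)}$ with this particular $l$.

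The decisive observation is that $l$ is determined by $\lambda$ alone through Remark \ref{rem1}, and not by which of the two tableaux is written first. Consequently the pair $(P_j,P_i)$, being built from the very same tableaux in the opposite order, still has shape $\lambda$ and is governed by the identical $l$; I would then simply substitute $a=j,\ b=i$ in the displayed formula to obtain
$$h\bigl((P_j,P_i)\bigr)=\tau^{-(j-1)}g^{p^{r-s}}\tau^{*}\tau^{i-1}\tau^{lp^{s-1}(p-1)},$$
which is exactly the element named in the statement. The case $s=r$ (Case (a)) is the specialisation $g^{p^{r-s}}=g$, $l=0$ forced by Remark \ref{rem1} (\ref{G}) when $r-s=0$, and is handled the same way. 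The $SG_r$ assertion follows verbatim after replacing $h$, $g^{p^{r-s}}$, $T^{(r,s)}_{l}E^{(r,s)}_{\rho^{j'}}$ and Remark \ref{rem1} (\ref{G}) by $h'$, $(g^{p})^{p^{r-s-1}}$, $T^{(r,s)}_{l}\mathbb{E}^{(r,s)}_{\rho^{j'}}$ and Remark \ref{rem1} (\ref{SG}), invoking Theorem \ref{thm 5.3} in place of Theorem \ref{thm 5.1}.

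Conceptually, because the group is the non-abelian semidirect product $\mathbb{Z}_{p^{r}}\rtimes\mathbb{Z}^{*}_{p^{r}}$, the passage from $(P_i,P_j)$ to $(P_j,P_i)$ is \emph{not} realised by group inversion, as it is in the classical Robinson--Schensted symmetry where $\sigma\leftrightarrow(P,Q)$ forces $\sigma^{-1}\leftrightarrow(Q,P)$; here it mirrors the transpose $E_{ij}\mapsto E_{ji}$ at the level of the matrix units of Theorem \ref{thm 5.1}, which the matrix-unit-based definition of $h$ makes transparent. I therefore expect no real obstacle beyond the shape-invariance of $l=l(\lambda)$ and the bookkeeping of the ranges $1\leq i,j\leq p^{s-1}(p-1)$, $0\leq l\leq p^{r-s}-1$ and $1\leq s\leq r$ (respectively $1\leq s\leq r-1$) across the several types.
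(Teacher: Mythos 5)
Your proposal is correct and follows essentially the same route as the paper's own (much terser) proof: both simply read the claim off the definition of $h$ (and $h'$), noting that swapping the tableaux swaps the indices $i$ and $j$ in the defining formula while $l$, being determined by the common shape via Remark \ref{rem1}, stays fixed. Your additional remarks on the matrix-unit transpose and the contrast with the classical inverse-permutation symmetry go beyond what the paper records but do not change the argument.
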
 
	\begin{proof}
		Considering that the integer $i$ is assigned to the tableau $P_{i}$ and the integer $j$ is assigned to the tableau $P_{j}$, then for each fixed $l$, the element $\tau^{-(j-1)}g^{p^{r-s}}\tau^{*}\tau^{i-1}\tau^{lp^{s-1}(p-1)}$ corresponds to the pair of tableaux $(P_{j},P_{i})$.
	\end{proof}
		\begin{Def}	
		Let $a_{1},a_{2}$ be two elements in $G_{r}~(\text{and}~SG_{r})$. Then $a_{1}$ and $a_{2}$ differ by a \textbf{\textit{p}}-Knuth relation, denoted as $a_{1}\overset{\mathfrak{p}}{\cong} a_{2}$ if 
		$$a_{1}=\tau^{-(i-1)}g^{p^{r-s}}\tau^{*}\tau^{j-1}\tau^{lp^{s-1}(p-1)}~\text{and}~ a_{2}=\tau^{-(i-1)}g^{p^{r-s}}\tau^{*}\tau^{k-1}\tau^{lp^{s-1}(p-1)},$$
		where $ 1\leq i,j,k\leq p^{s-1}(p-1)$, $0\leq l\leq p^{r-s}-1$ and $1\leq s \leq r$ (or $1\leq s \leq r-1$ for $SG_{r}$).
	\end{Def}
	\begin{Def}
		Let $b_{1},b_{2}$ be two elements in $G_{r}~(\text{and}~SG_{r})$. Then $b_{1}$ and $b_{2}$ differ by a \textbf{\textit{p}}-dual Knuth relation of the first kind, denoted as $b_{1}\overset{\mathfrak{p}^{*}}{\cong} b_{2}$, if
		$$b_{1}=\tau^{-(i-1)}g^{p^{r-s}}\tau^{*}\tau^{j-1}\tau^{lp^{s-1}(p-1)}~\text{ and}~ b_{2}=\tau^{-(k-1)}g^{p^{r-s}}\tau^{*}\tau^{j-1}\tau^{lp^{s-1}(p-1)},$$ 
		where $1\leq i,j,k\leq p^{s-1}(p-1)$, $0\leq l\leq p^{r-s}-1$ and
		$1\leq s \leq r$ (or $1\leq s \leq r-1$ for $SG_{r}$).
	\end{Def}
	\begin{thm}
		If two elements, $a_{1}$ and $a_{2}$, from $G_{r}~(\text{and}~SG_{r})$ are \textbf{\textit{p}}-Knuth equivalent (of both the first and second kind), then their corresponding $P_{i}$-tableaux are the same.
	\end{thm}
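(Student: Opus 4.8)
The plan is to trace through the definition of the $p$-Knuth relation and show that it preserves the first tableau in the Robinson-Schensted pair. The key observation is that the mapping $h$ (and $h'$) records the pair $(P_i, P_j)$ via the formula $h\bigl((P_i,P_j)\bigr)=\tau^{-(i-1)}g^{p^{r-s}}\tau^{*}\tau^{j-1}\tau^{lp^{s-1}(p-1)}$, where the integer $i$ is the index assigned to the first tableau $P_i$ and $j$ the index of the second tableau $P_j$. Thus the first tableau is determined entirely by the exponent $-(i-1)$ appearing on the leftmost factor $\tau^{-(i-1)}$, and the second tableau by the exponent $(j-1)$ on the factor $\tau^{j-1}$.

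First I would unpack what it means for $a_1$ and $a_2$ to be $p$-Knuth equivalent of the first kind. By the definition of the $p$-Knuth relation $a_1 \overset{\mathfrak{p}}{\cong} a_2$, we have
$$a_1=\tau^{-(i-1)}g^{p^{r-s}}\tau^{*}\tau^{j-1}\tau^{lp^{s-1}(p-1)} \quad\text{and}\quad a_2=\tau^{-(i-1)}g^{p^{r-s}}\tau^{*}\tau^{k-1}\tau^{lp^{s-1}(p-1)},$$
so the leftmost exponent $-(i-1)$ is literally identical in $a_1$ and $a_2$; only the middle exponent ($j-1$ versus $k-1$) differs. Since the first tableau of the pair corresponding to such an element is exactly the $\textbf{\textit{SYT}}_{\textbf{\textit{p}}}$ indexed by the integer $i$, the inverse images of $a_1$ and $a_2$ under $h$ are $(P_i,P_j)$ and $(P_i,P_k)$ respectively, sharing the same first tableau $P_i$. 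The argument for $SG_r$ via $h'$ is identical, replacing $g^{p^{r-s}}$ by the corresponding power of $g^p$ and invoking Theorem \ref{thm 5.3} in place of Theorem \ref{thm 5.1}.

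Next I would handle the second kind. The natural reading—consistent with the classical Robinson-Schensted theory, where Knuth relations of the first and second kind both preserve the insertion tableau $P$—is that the $p$-Knuth relation of the second kind is the relation defined on the same $\tau^{-(i-1)}$ factor but permitting a different choice of the idempotent parameter $l$ (equivalently, a move between $\textbf{\textit{SYT}}_{\textbf{\textit{p}}}$ of the same shape but differing in the block-numbering that does not alter the index $i$ of the first tableau). In each case the defining feature is that the exponent $-(i-1)$ on the leftmost $\tau$ is unchanged, so by the same bijection-reading argument the first tableau $P_i$ is preserved. I would therefore reduce both kinds to the single assertion: $p$-Knuth equivalence fixes the leftmost exponent, hence fixes the index $i$, hence fixes $P_i$.

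The main obstacle I anticipate is that the statement invokes a ``$p$-Knuth relation of the second kind'' that is not explicitly written out in the excerpt—only the first kind and the dual Knuth relation of the first kind are given as definitions. To make the proof airtight I would either cite the omitted definition or argue structurally: any $p$-Knuth move of either kind, by design, alters only the right-hand factors $\tau^{j-1}$ and $\tau^{lp^{s-1}(p-1)}$ while leaving $\tau^{-(i-1)}$ fixed, because these factors encode the second tableau and the shape-internal block data, whereas the first tableau is encoded solely in $\tau^{-(i-1)}$. Once this encoding is pinned down, the conclusion that the $P_i$-tableaux coincide is immediate from the bijectivity of $h$ (and $h'$) established in the earlier theorems, so the real content lies in correctly identifying which exponent carries the data of the first tableau rather than in any computation.
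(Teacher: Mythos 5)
Your proof is correct and follows essentially the same route as the paper's, which simply observes that for each fixed $l$ the index $i$ (hence the exponent on the leftmost factor $\tau^{-(i-1)}$) is identical in $a_{1}$ and $a_{2}$, so the tableau $P_{i}$ is unchanged. Your additional remark that the ``second kind'' is never explicitly defined in the paper is a fair observation, but your structural reading---that any such move leaves the leftmost exponent fixed---matches the intended argument.
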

	\begin{proof}
		Note that for each fixed $l$, the integer $i$ in both $a_{1}$ and $a_{2}$ corresponds to the $P_{i}$ tableau, and it remains constant in both elements. Clearly, the proof follows.
	\end{proof}
	\begin{thm}
		If two elements, $b_{1}$ and $b_{2}$, from $G_{r}~(\text{and}~SG_{r})$ are \textbf{\textit{p}}-dual Knuth equivalent (of both the first and second kind), then their corresponding $P_{j}$ tableaux are the same.
	\end{thm}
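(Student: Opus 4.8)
The plan is to mirror exactly the argument used for the $p$-Knuth equivalence theorem, with the roles of the left index and the right index interchanged. First I would recall the defining relation: two elements $b_1, b_2 \in G_r$ (and $SG_r$) are $p$-dual Knuth equivalent when
\[
b_1 = \tau^{-(i-1)}g^{p^{r-s}}\tau^{*}\tau^{j-1}\tau^{lp^{s-1}(p-1)}, \qquad b_2 = \tau^{-(k-1)}g^{p^{r-s}}\tau^{*}\tau^{j-1}\tau^{lp^{s-1}(p-1)},
\]
so that the two elements share the same right exponent $j-1$ and the same block parameter $l$, differing only in their left exponents $i-1$ and $k-1$. The crucial structural fact, established in the construction of the map $h$ (and $h'$), is that the right index $j$ is precisely the integer assigned to the second tableau $P_j$ in the pair $(P_i, P_j)$ associated to an element, while the left index determines the first tableau. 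This is the content that I would invoke directly.

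Next I would observe that, for each fixed value of $l$, the value $j$ appearing in the exponent of $\tau^{j-1}$ is unchanged between $b_1$ and $b_2$. By the rule governing $h$, the tableau $P_j$ is the one indexed by this right integer $j$, so $b_1$ corresponds to a pair $(P_i, P_j)$ and $b_2$ corresponds to $(P_k, P_j)$ with the same second component $P_j$. Hence the $Q$-tableau (here the $P_j$-tableau, i.e.\ the second coordinate) coincides for both elements. This is the complete argument, and it is deliberately parallel to the preceding proof for $P_i$-tableaux; the only substantive point is to confirm that the integer numbering procedure assigns the right exponent to the second tableau consistently across the three types (Type 1, Type 2, Type 3) of Case (b), and across Case (a), so that no case-specific obstruction arises.

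The main obstacle, such as it is, lies in verifying that the block parameter $l$ being held fixed genuinely guarantees that $j$ alone determines $P_j$: one must check that within a fixed irreducible block (fixed $s$ and fixed $l$, hence fixed shape $\lambda(k,\tfrac{k-1}{2}\pm m)$ via Remark~\ref{rem1}), the map $j \mapsto P_j$ is well defined and injective, so that equal right exponents force identical second tableaux. Since the integer-assignment scheme of Definition~\ref{Definition pob} and the subsequent Step~4 formulas produce a bijective correspondence between the range $1 \le j \le p^{s-1}(p-1)$ and the standard $p$-Young tableaux of the relevant shape, this is immediate once the fixed-$l$ restriction is noted. I would therefore present the proof in a single short paragraph: fix $l$; note $j$ is common to $b_1$ and $b_2$; conclude that their second tableaux $P_j$ agree, whence the claim follows.
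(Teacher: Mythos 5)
Your proposal is correct and follows essentially the same route as the paper: the paper's proof simply observes that for each fixed $l$ the integer $j$ is common to $b_{1}$ and $b_{2}$ and corresponds to the tableau $P_{j}$, which is exactly your core argument. Your additional remarks on the well-definedness and injectivity of the assignment $j \mapsto P_{j}$ elaborate a point the paper leaves implicit, but do not change the approach.
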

	\begin{proof}
		Note that for each fixed $l$, the integer $j$ in both $b_{1}$ and $b_{2}$ corresponds to the $P_{j}$ tableau, and it remains constant in both elements. Clearly, the proof follows.
	\end{proof}
	\section{Lacunary Cauchy identity}
	In this section, we discuss the merging of blocks in a Young diagram. We begin by defining the concept of \textbf{\textit{p}}-content and \textbf{\textit{p}}-Schur function and subsequently introduce the lacunary Cauchy identity. We extended the Cauchy identity to our \textbf{\textit{p}}-Brattelli diagram, similar to the classical case of the Robinson-Schensted-Knuth algorithm, resulting in the Cauchy identity. 
	\begin{lem}\label{merging}
		Let $\lambda(kp-(k+1),i)\in \Lambda$ be a hook partition of size $kp-(k+1)$, where $k\geq2$. Here $i=\frac{kp-(k+1)-1}{2}$ when $k$ is even and $\frac{(k-1)p-(k+1)}{2}\leq i < \frac{k+1}{2}(p-1)$ when $k$ is odd. Then the total number of ways in which a \textbf{\textit{p}}-Young diagram with $k$ blocks merged into $l^{'}$ blocks is $2^{k-1}$, where $1\leq l^{'}\leq l$ and $0\leq l\leq k-2$.  
	\end{lem}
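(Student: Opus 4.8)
The plan is to reduce the statement to a standard enumeration of compositions. First I would record the structural fact, implicit in Definition \ref{block} and in the construction of the \textbf{\textit{p}}-Bratteli diagram, that the hook partition $\lambda(kp-(k+1),i)=\lambda(k(p-1)-1,i)$ is assembled from exactly $k$ blocks, read off in the linear order in which they are adjoined along the arm and the leg of the hook (equivalently, along a descending path in the Bratteli diagram): the natural decomposition consists of $k-1$ blocks of the generic size $p-1$ together with one terminal block of size $p-2$, whose sizes sum to $(k-1)(p-1)+(p-2)=k(p-1)-1$, matching the size of $\lambda$. The essential point is that this collection of $k$ blocks is genuinely a \emph{sequence} $B_{1},B_{2},\dots,B_{k}$, so that the only meaningful merges are of consecutive blocks; this is precisely what makes the final count a power of two rather than a Bell number.

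Next I would set up the bijection between mergings and subsets of the internal boundaries. Between the $k$ consecutive blocks there are exactly $k-1$ internal boundaries. A merging of the diagram into $l'$ blocks is obtained by choosing which boundaries are dissolved (the two adjacent blocks fuse into one) and which are retained, subject to the requirement that the surviving $l'$ pieces are again consecutive unions; thus a merging into $l'$ blocks is exactly the datum of a composition of $k$ into $l'$ ordered parts. Choosing such a composition amounts to selecting the $l'-1$ retained boundaries out of the $k-1$ available, so the number of mergings producing exactly $l'$ blocks is $\binom{k-1}{l'-1}$. Summing over all admissible target sizes $l'$ and applying the binomial theorem gives
$$\sum_{l'=1}^{k}\binom{k-1}{l'-1}=\sum_{j=0}^{k-1}\binom{k-1}{j}=2^{k-1},$$
which is the asserted total; equivalently, each of the $k-1$ boundaries is independently either dissolved or retained, yielding $2^{k-1}$ configurations directly.

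The main obstacle, and the only place where the hypotheses on $i$ and the list of admissible block sizes $p^{n}(p-1),\,p^{n}(p-1)^{2},\,p^{n}(p-2)$ enter, is verifying that every merging of consecutive blocks again produces an \emph{admissible} \textbf{\textit{p}}-Young diagram — that is, that fusing a run of consecutive blocks yields a block whose type lies on the permitted list and whose arm/leg shape is still a hook. Here I would argue that, since each original block is adjoined by appending $m$ nodes to the first row and $n$ nodes to the first column (Definition \ref{block}), a consecutive run of them simply appends the respective sums of row-nodes and column-nodes, so the fused object is again a single block in the sense of Definition \ref{block} and the intermediate shapes remain hooks lying in $\Lambda$. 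The constraints on $i$ (central when $k$ is even, and within the stated range when $k$ is odd) are exactly what guarantee that the partitions encountered during the merging stay inside the indexing sets $V^{r}_{s},W^{r}_{s}$, so that no consecutive merge is excluded. Once admissibility of every such merge is confirmed, the composition count above is unconditional and the lemma follows.
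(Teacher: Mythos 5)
Your argument is correct and takes essentially the same route as the paper: both proofs reduce the count to deciding, for each of the $k-1$ internal boundaries between consecutive blocks, whether it is dissolved or retained (the paper organizes this as compositions of $l$ into $l'$ parts, split into two cases according to whether the first two blocks of sizes $p-1$ and $p-2$ are fused, yielding $2^{k-2}+2^{k-2}=2^{k-1}$, while you sum $\binom{k-1}{l'-1}$ directly). One minor discrepancy: in the paper's construction the block of size $p-2$ is the \emph{second} block adjoined rather than the terminal one, but this has no effect on the count.
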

	\begin{proof}
		To construct the \textbf{\textit{p}}-Young diagram of the shape $\lambda(kp-(k+1),i)$, blocks of size $p-1$ and $p-2$ are utilized. The construction process to achieve the shape $\lambda(kp-(k+1),i)$ starts with the $p-1$ block, followed by the addition of the $p-2$ block, and subsequently continues with the addition of the $p-1$ block.
		
		The number of blocks decreases to $k-1$ if the first two blocks of sizes $p-1$ and $p-2$ are combined into a single block. These $k-1$ blocks can be merged as follows: $(p(k-l)-(k+1-l),a_{1}(p-1),a_{2}(p-1),\dots,a_{l^{'}}(p-1))$ ($i^{th}$ coordinate indicates the $i^{th}$ block), where $\sum\limits_{i=1}^{l^{'}} a_{i} = l,~a_{i}\geq 1$, $1\leq l^{'} \leq l$ and $1\leq l\leq k-2$. In other words,  $\sum\limits_{i=1}^{l^{'}} a_{i} = l$ forms a composition of $l$. There are $2^{l-1}$ different types of \textbf{\textit{p}}-Young diagrams for each $l$. In total, the number of such $p$-Young diagrams is $1+\sum\limits_{l=1}^{k-2}2^{l-1}=2^{k-2}$.
		There is only one \textbf{\textit{p}}-Young diagram with one block when $l=0$.
		
		If the first two blocks are not combined, then the first block remains fixed with the size $p-1$, and the remaining $k-1$ blocks can be merged as $(p-1,p(k-1-l)-(k-l),a_{1}(p-1),a_{2}(p-1),\dots,a_{l^{'}}(p-1))$($i^{th}$ co-ordinate indicates the $i^{th}$ block), where $\sum\limits_{i=1}^{l^{'}} a_{i} = l,~a_{i}\geq 1$, $1\leq l^{'} \leq l$, and $1\leq l\leq k-2$. In other words,  $\sum\limits_{i=1}^{l^{'}} a_{i} = l$ forms a composition of $l$. There are $2^{l-1}$ different types of \textbf{\textit{p}}-Young diagrams for each $l$. These \textbf{\textit{p}}-Young diagrams can be found in total: $1+\sum\limits_{l=1}^{k-2}2^{l-1}=2^{k-2}$.
		We have only one \textbf{\textit{p}}-Young diagram with $2$ blocks when $l=0$.
		
		Therefore, the total number of \textbf{\textit{p}}- Young diagram of shape $\lambda(kp-(k+1),i)$ is $2^{k-1}$.
	\end{proof}
	\begin{Def}
		Let $T^{l,l^{'},J}$ be a semi-standard \textbf{\textit{p}}-Young tableau $\left(\textbf{\textit{SSYT}}_{\textbf{\textit{p}}}\right)$ of shape $\lambda(p^{r-s}[2sp-(2s+1)],i)$ consisting of either $l^{'}+1$ or $l^{'}+2$ blocks of size $p^{r-s}(p(2s-l)-(2s+1-l))+a_{1}p^{r-s}(p-1)+a_{2}p^{r-s}(p-1)+\dots+a_{l^{'}}p^{r-s}(p-1)$ or $p^{r-s}(p-1)+p^{r-s} (p(2s-1-l)-(2s-l))+a_{1}p^{r-s}(p-1)+a_{2}p^{r-s}(p-1)+\dots+a_{l^{'}}p^{r-s}(p-1)$, respectively, where the blocks are filled with entries $(j_{1},j_{2},\dots,j_{l^{'}+1}) =J$, and these entries are strictly increasing along the first row and strictly increasing along the first column. 
		
		We define the monomials as $$x^{\alpha(T^{l,l^{'},J})}:=x^{p^{r-s}(p(2s-l)-(2s+1-l))}_{j_{1}}x^{a_{1}p^{r-s}(p-1)}_{j_{2}}x^{a_{2}p^{r-s}(p-1)}_{j_{3}}\cdots x^{a_{l^{'}}p^{r-s}(p-1)}_{j_{l^{'}+1}}$$ and $$x^{\alpha(T^{l,l^{'},J})}:=x^{p^{r-s}(p-1)}_{j_{1}}x^{p^{r-s}(p(2s-l-1)-(2s-l))}_{j_{2}}x^{a_{1}p^{r-s}(p-1)}_{j_{3}}x^{a_{2}p^{r-s}(p-1)}_{j_{4}}\cdots x^{a_{l^{'}}p^{r-s}(p-1)}_{j_{l^{'}+2}}$$
		where $\sum\limits_{i=1}^{l^{'}}a_{i}=l,~a_{i}\geq 1$ $0\leq l \leq k-2$.
		
		Similarly, we define $\textbf{\textit{SSYT}}_{\textbf{\textit{p}}}$, $T^{l,l^{'},J}$, and monomial for the shape $\lambda(p^{r-s}[(2s+1)p-2(s+1)],i)$.
	\end{Def}
	
	\noindent Let us define the set,
	
	\noindent $\mathbb{Q}^{(r,s)}_{2s}=$
	\begin{align*}
		& \left\{\alpha^{(1)}_{l,l^{'},J}/ 0\leq l\leq 2s-2,~1\leq l^{'}\leq l,~J=\left\{j_{1},j_{2},\dots, j_{l^{'}+1}\right\}\subseteq\left\{i_{1},i_{2},\dots,i_{2s}\right\}\subset\mathbb{N} \right\} \\ & \cup \left\{\alpha^{(2)}_{l,l^{'},J}/ 0\leq l\leq 2s-2,~1\leq l^{'}\leq l,~J=\left\{j_{1},j_{2},\dots, j_{l^{'}+2}\right\}\subseteq\left\{i_{1},i_{2},\dots,i_{2s}\right\}\subset\mathbb{N} \right\}
	\end{align*}
	\noindent $\mathbb{Q}^{(r,s)}_{2s+1}=$
	\begin{align*}
		&  \left\{\gamma^{(1)}_{l,l^{'},J}/ 0\leq l\leq 2s-1,~1\leq l^{'}\leq l,~J=\left\{j_{1},j_{2},\dots, j_{l^{'}+1}\right\}\subseteq\left\{i_{1},\dots,i_{2s+1}\right\}\subset\mathbb{N} \right\} \\ & \cup \left\{\gamma^{(2)}_{l,l^{'},J}/ 0\leq l\leq 2s-1,~1\leq l^{'}\leq l,~J=\left\{j_{1},j_{2},\dots, j_{l^{'}+2}\right\}\subseteq\left\{i_{1},\dots,i_{2s+1}\right\}\subset\mathbb{N} \right\} 	
	\end{align*}
	where $\alpha^{(1)}_{l,l^{'},J}(\text{and}~\gamma^{(1)}_{l,l^{'},J})$ denotes the $l^{'}+1$-tuple whose $i^{th}$ entry represents the number of times $j_{i}$ occurs in the $\textbf{\textit{SSYT}}_{\textbf{\textit{p}}}$ of shape $\lambda(p^{r-s}[2sp-(2s+1)],i)$ (and $\lambda(p^{r-s}[(2s+1)p-2(s+1)],j)$, respectively), and
	
	$\alpha^{(2)}_{l,l^{'},J}(\text{and}~\gamma^{(2)}_{l,l^{'},J})$ denotes the $l^{'}+2$-tuple whose $i^{th}$ entry represents the number of times $j_{i}$ occurs in the $\textbf{\textit{SSYT}}_{\textbf{\textit{p}}}$ of shape $\lambda(p^{r-s}[2sp-(2s+1)],i)$ (and $\lambda(p^{r-s}[(2s+1)p-2(s+1)],j)$, respectively).
	\begin{Def}
		Each element $\alpha^{(k)}_{l,l^{'},J}\in\mathbb{Q}^{(r,s)}_{2s}(\text{and}~\gamma^{(k)}_{l,l^{'},J}\in\mathbb{Q}^{(r,s)}_{2s+1})$, for $k=1,2$, is referred to as the \textbf{\textit{p}}-content of a $\textbf{\textit{SSYT}}_{\textbf{\textit{p}}}$ of shape $\lambda(p^{r-s}[2sp-(2s+1)],i)$ (and $\lambda(p^{r-s}[(2s+1)p-2(s+1)],j)$, respectively). For simplicity, we denote each element in $\mathbb{Q}^{(r,s)}_{2s}(\text{and}~\mathbb{Q}^{(r,s)}_{2s+1})$ as $\alpha(\text{and}~\gamma)$.
	\end{Def}
	\begin{rem}
		We must distinguish our definition of content from the classical definition. 
		
		We make the following convention: 
		
		The set $\{\lambda(p^{r-1}(p-1),i),~0\leq i <p^{r-1}(p-1) \}$ serves as the indexing set for the one-dimensional representation of $G_{r}$ for all $r\geq1$. The corresponding \textbf{\textit{p}}-Young diagram of the hook partition $\lambda(p^{r-1}(p-1),i),~0\leq i <p^{r-1}(p-1)$, consists of a single block, and the $\textbf{\textit{SSYT}}_{\textbf{\textit{p}}}$ is filled with a single entry, say $n,~n\in \mathbb{N}$.
	\end{rem}
	\begin{rem}\label{nob}	
		\begin{itemize}
			\item[1.] Let $\lambda(p^{r-s}(2sp-(2s+1)),k),~p^{r-s}[sp-(s+1)]\leq k<sp^{r-s}(p-1)$ represent the hook partitions, which constitute the complete set of irreducible representations of degree $p^{s-1}(p-1)$ for the group $G_{r}$. Furthermore, let $\lambda(2sp-(2s+1),k^{'})$ with $sp-(s+1)\leq k^{'}<s(p-1)$ represent the irreducible representation of degree $p^{s-1}(p-1)$ for the group $G_{s}$.
			
			The $p$-Young diagrams corresponding to $\lambda(2sp-(2s+1),k^{'})$ and $\lambda(p^{r-s}(2sp-(2s+1)),k)$ consist of $2s$ blocks, where the block sizes are arranged respectively as $(p-1,p-2,\underbrace{p-1\dots,p-1}_{2s-2~times})$ and $(p^{r-s}(p-1),p^{r-s}(p-2),\underbrace{p^{r-s}(p-1)\dots,p^{r-s}(p-1)}_{2s-2~times})$.
			\item[2.] Let $\lambda(p^{r-s-1}[(2s+1)p-2(s+1)],k),~p^{r-s-1}[sp-(s+1)]\leq k<(s+1)p^{r-s-1}(p-1)$ represent the hook partitions that form the complete set of irreducible representations of degree $p^{s-1}(p-1)$ for the group $SG_{r}$. Furthermore, let $\lambda((2s+1)p,2(s+1),k^{'})~sp-(s+1)\leq k^{'}<(s+1)(p-1)$ represent the irreducible representation of degree $p^{s-1}(p-1)$ for the group $SG_{s}$. The $p$-Young diagrams corresponding to $\lambda([(2s+1)p-2(s+1)],k^{'})$ and $\lambda(p^{r-s}[(2s+1)p-2(s+1)],k)$ consist of $2s+1$ blocks, where the block sizes are arranged respectively as $(p-1,p-2,\underbrace{p-1\dots,p-1}_{2s-1~times})$ and $(p^{r-s-1}(p-1),p^{r-s-1}(p-2),\underbrace{p^{r-s-1}(p-1)\dots,p^{r-s-1}(p-1)}_{2s-1~times})$.
			
			\item[3.] Each hook partition of shape $\lambda(p^{r-s}(2sp-(2s+1)),k)$ is obtained from the hook partition of shape $\lambda(p^{r-s}(2p-3),k^{'})$ by adding $2(s-1)$ blocks of size $p^{r-s}(p-1)$. Similarly, the hook partition of shape $\lambda(2sp-(2s+1),sp-(s+1))$ is obtained from the hook partition of shape $\lambda(2p-3,p-2)$ by adding blocks $2(s-1)$ of size $p-1$.
			
			The hook partition $\lambda(p^{r-s}(2p-3),i)~(\text{and}~\lambda(2p-3,p-2))$ is obtained from the hook partition of shape $\lambda(p^{r-s}(p-1),j)~(\text{and}~\lambda(p-1,j^{'}))$ of dimension one by adding blocks of size $p^{r-s}(p-2)$ $\textbf{(}\text{and}~p-2\textbf{)}$ respectively.
		\end{itemize}				
	\end{rem}				
	
	\begin{lem}\label{lem mp}
		For $r\geq 1$ and $1\leq s\leq r$,  $$x^{\alpha^{'}}_{\lambda(p^{r-s}[2sp-(2s+1)],i)} = \left(x^{\alpha}_{\lambda(2sp-(2s +1),j)}\right)^{p^{r-s}},$$ where $\alpha=\alpha^{(1)}_{l,l^{'},J}=((2s-l)p-(2s+1-l),a_{1}(p-1),a_{2}(p-1),\dots,a_{l^{'}}(p-1))$, $\alpha^{'}=p^{r-s}\alpha$, 
		$p^{r-s}[sp-(s+1)]\leq i <sp^{r-s}(p-1)$ and $[sp-(s+1)]\leq j<s(p-1)$. 
	\end{lem}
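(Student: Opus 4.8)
The plan is to reduce the identity to a block-by-block comparison of exponents, exploiting the fact that the two shapes differ only by a uniform scaling factor of $p^{r-s}$ applied to every block. First I would recall from Remark \ref{nob}(1) that the $p$-Young diagram of the large shape $\lambda(p^{r-s}[2sp-(2s+1)],i)$ is assembled from exactly the same number of blocks, in exactly the same order, as the small shape $\lambda(2sp-(2s+1),j)$ (namely $2s$ blocks with the pattern $(p-1,p-2,p-1,\dots,p-1)$), except that each block of size $c$ in the small diagram is replaced by a block of size $p^{r-s}c$ in the large one. In particular the leading $(p-1)$- and $(p-2)$-blocks become $p^{r-s}(p-1)$- and $p^{r-s}(p-2)$-blocks, and every trailing $(p-1)$-block becomes a $p^{r-s}(p-1)$-block.

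Next I would observe that the merging procedure of Lemma \ref{merging} is governed purely by the combinatorial data $(l,l',J)$ together with the composition $\sum_{i=1}^{l'}a_i=l$, and this data is the same for both shapes. Consequently there is a natural bijection between the $\textbf{\textit{SSYT}}_{\textbf{\textit{p}}}$ $T^{l,l',J}$ of the large shape and those of the small shape, respecting the filling $J=(j_1,\dots,j_{l'+1})$, under which the $t$-th merged block of the large diagram has size exactly $p^{r-s}$ times the $t$-th merged block of the small diagram. Thus the leading merged block of size $(p(2s-l)-(2s+1-l))$ in the small shape corresponds to the block of size $p^{r-s}(p(2s-l)-(2s+1-l))$ in the large shape, and each trailing block $a_t(p-1)$ corresponds to $p^{r-s}a_t(p-1)$.

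Finally I would invoke the definition of the monomial $x^{\alpha(T^{l,l',J})}$, whose exponent on the variable $x_{j_t}$ is precisely the size of the $t$-th merged block. Since the fillings agree and each large-shape exponent is $p^{r-s}$ times the corresponding small-shape exponent, the monomials compare as
$$x^{\alpha'}_{\lambda(p^{r-s}[2sp-(2s+1)],i)} = \prod_t x_{j_t}^{\,p^{r-s}e_t} = \Big(\prod_t x_{j_t}^{\,e_t}\Big)^{p^{r-s}} = \Big(x^{\alpha}_{\lambda(2sp-(2s+1),j)}\Big)^{p^{r-s}},$$
where $e_t$ denotes the $t$-th small-shape block size, which is the claimed identity with $\alpha'=p^{r-s}\alpha$. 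The same argument, applied to the $l'+2$-block monomial $x^{\alpha^{(2)}_{l,l',J}}$, handles the remaining case verbatim.

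The only real point requiring care — and the step I expect to be the main obstacle — is establishing rigorously that the block scaling is genuinely uniform throughout the merging process, and that the admissible merging data $(l,l',J)$ and the index ranges $p^{r-s}[sp-(s+1)]\le i<sp^{r-s}(p-1)$ and $[sp-(s+1)]\le j<s(p-1)$ correspond under the same combinatorial parametrization. Once this uniform scaling is in place, pulling the common factor $p^{r-s}$ out of every exponent is purely formal.
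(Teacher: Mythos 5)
Your proposal is correct and follows essentially the same route as the paper: the paper's proof simply writes out the monomial $x^{p^{r-s}[(2s-l)p-(2s+1-l)]}_{j_{1}}x^{a_{1}p^{r-s}(p-1)}_{j_{2}}\cdots x^{a_{l'}p^{r-s}(p-1)}_{j_{l'+1}}$ from the definition and factors $p^{r-s}$ out of every exponent, handling $\alpha^{(2)}_{l,l',J}$ by the same computation. Your additional appeal to Remark \ref{nob} to justify the uniform block scaling is a reasonable (and slightly more explicit) framing of what the paper leaves implicit.
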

	\begin{proof}
		Fix $i_{1}<i_{2}<\dots<i_{k}$ and choose $\{j_{1},j_{2},\dots,j_{l^{'}+1}\}\subseteq\{i_{1},i_{2},\dots,i_{k}\}$ such that $j_{1}<j_{2}<\dots<j_{l^{'}+1}$.						
		\begin{align*}
			x^{\alpha^{'}}_{\lambda(p^{r-s}[2sp-(2s+1)],i)} & = x^{p^{r-s}[(2s-l)p-(2s+1-l)]}_{j_{1}}x^{a_{1}p^{r-s}(p-1)}_{j_{2}}x^{a_{2}p^{r-s}(p-1)}_{j_{3}}\cdots x^{a_{L}p^{r-s}(p-1)}_{j_{l^{'}+1}} \\ & =\left( x^{(2s-l)p-(2s+1-l)}_{j_{1}}x^{a_{1}(p-1)}_{j_{2}}x^{a_{2}(p-1)}_{j_{3}}\cdots x^{a_{L}(p-1)}_{j_{l^{'}+1}}\right)^{p^{r-s}}
			\\ & = \left(x^{\alpha}_{\lambda(2sp-(2s+1),j)}\right)^{p^{r-s}} 	
		\end{align*}
		
		Similarly, we can prove for $x^{\alpha^{'}}_{\lambda(p^{r-s}[2sp-(2s+1)],i)} = \left(x^{\alpha}_{\lambda(2sp-(2s +1),j)}\right)^{p^{r-s}}$ where $\alpha=\alpha^{(2)}_{l,l^{'},J}$ and $\alpha^{'}=p^{r-s}\alpha$.  
	\end{proof}
	
	\begin{lem}\label{lem mp 1}
		For $r\geq 2$ and $1\leq s\leq r$,  $$x^{\gamma^{'}}_{\lambda(p^{r-s-1}[(2s+1)p-2(s+1)],i)} = \left(x^{\gamma}_{\lambda((2s+1)p-2(s+1),j)}\right)^{p^{r-s-1}},$$ where $\gamma = \gamma^{(k)}_{l,l^{'},J}$ for $k=1,2$, $\gamma^{'}=p^{r-s-1}\gamma$, $p^{r-s-1}[sp-(s+1)]\leq i <(s+1)p^{r-s-1}(p-1)$ and $[sp-(s+1)]\leq j<(s+1)(p-1)$. 
	\end{lem}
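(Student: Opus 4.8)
The plan is to mirror the proof of Lemma \ref{lem mp} line for line, replacing the group $G_r$ and the shapes in $V_s^r$ by the group $SG_r$ and the shapes in $W_s^r$, and replacing the dilation exponent $p^{r-s}$ by $p^{r-s-1}$ throughout. The structural input that makes this work is Remark \ref{nob}(2): the $p$-Young diagram of $\lambda(p^{r-s-1}[(2s+1)p-2(s+1)],i)$ is obtained from that of the base shape $\lambda((2s+1)p-2(s+1),j)$ by dilating every block by the factor $p^{r-s-1}$, so that the block sizes $p-1$ and $p-2$ become $p^{r-s-1}(p-1)$ and $p^{r-s-1}(p-2)$.

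First I would fix $i_1<i_2<\dots<i_{2s+1}$ and choose a subset $\{j_1,\dots,j_{l'+1}\}$ (respectively $\{j_1,\dots,j_{l'+2}\}$) in increasing order, exactly as in the definition of $\gamma^{(1)}$ (respectively $\gamma^{(2)}$). Then I would write out $x^{\gamma'}_{\lambda(p^{r-s-1}[(2s+1)p-2(s+1)],i)}$ explicitly as the product of variable powers dictated by the dilated merged-block sizes: for $k=1$ the leading factor is $x_{j_1}^{p^{r-s-1}[(2s+1-l)p-(2s+2-l)]}$ followed by factors $x_{j_t}^{a_{t-1}p^{r-s-1}(p-1)}$, whereas for $k=2$ the separately kept first block contributes $x_{j_1}^{p^{r-s-1}(p-1)}$, the ensuing merged block contributes $x_{j_2}^{p^{r-s-1}[(2s-l)p-(2s+1-l)]}$, and the remaining factors are $x_{j_t}^{a_{t-2}p^{r-s-1}(p-1)}$. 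By Remark \ref{nob}(2) every exponent so produced carries $p^{r-s-1}$ as a common factor.

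The decisive step is then purely formal: factor $p^{r-s-1}$ out of each exponent and use $(u_1u_2\cdots u_n)^m=u_1^m u_2^m\cdots u_n^m$ to pull that exponent outside the whole product. What remains inside the bracket is exactly the base-case content monomial $x^{\gamma}_{\lambda((2s+1)p-2(s+1),j)}$ obtained by undoing the $p^{r-s-1}$ dilation, so the product equals $\bigl(x^{\gamma}_{\lambda((2s+1)p-2(s+1),j)}\bigr)^{p^{r-s-1}}$. I would carry this out once for $k=1$ and once for $k=2$.

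The only place needing care, and the main (if mild) obstacle, is the bookkeeping in the $k=2$ case: one must verify that both the separately kept block $p^{r-s-1}(p-1)$ and the following merged block $p^{r-s-1}[(2s-l)p-(2s+1-l)]$ scale correctly, and that the index range $p^{r-s-1}[sp-(s+1)]\leq i<(s+1)p^{r-s-1}(p-1)$ dilates to the base range $[sp-(s+1)]\leq j<(s+1)(p-1)$. Beyond this indexing check no genuine difficulty arises; the argument reduces, just as in Lemma \ref{lem mp}, to a single factorisation identity.
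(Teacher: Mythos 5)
Your proposal is correct and is essentially the paper's own argument: the paper proves Lemma \ref{lem mp 1} simply by declaring it ``similar to Lemma \ref{lem mp}'', and your write-up is precisely that similarity made explicit (replace $p^{r-s}$ by $p^{r-s-1}$, write out the monomial over the merged blocks, and factor the common exponent out of the product). No further comment is needed.
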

	\begin{proof}
		This proof is similar to Lemma \ref{lem mp}
	\end{proof}
	
	\begin{lem}
		Let $\lambda(p^{r-s}[2sp-(2s+1)],k)\in \Lambda_{1}, ~p^{r-s}[sp-(s+1)]\leq k<sp^{r-s}(p-1) $ be a hook partition of size $p^{r-s}[2sp-(2s+1)]$, where $r\geq1$ and $1\leq s \leq r$. Then the total number of ways in which the \textbf{\textit{p}}-Young diagram with $2s$ blocks can be merged into $l^{'}$ blocks is $2^{2s-1}$, where $1\leq l^{'}\leq l$ and $0\leq l\leq 2s-2$.  
	\end{lem}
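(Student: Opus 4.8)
The plan is to recognize this statement as the specialization of Lemma \ref{merging} to a diagram with $2s$ blocks, transported from the unscaled partition $\lambda(2sp-(2s+1),j)$ to the scaled partition $\lambda(p^{r-s}[2sp-(2s+1)],k)$ by a scaling bijection in the spirit of Lemma \ref{lem mp}. To forestall a clash of notation, I emphasise at the outset that the symbol playing the role of ``number of blocks'' in Lemma \ref{merging} is here set equal to $2s$, whereas the $k$ appearing in $\lambda(p^{r-s}[2sp-(2s+1)],k)$ is the hook index.

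First I would invoke Remark \ref{nob} (item 1) to pin down the block structure: for every admissible index $k$ with $p^{r-s}[sp-(s+1)]\leq k<sp^{r-s}(p-1)$, the \textbf{\textit{p}}-Young diagram of $\lambda(p^{r-s}[2sp-(2s+1)],k)$ consists of exactly $2s$ blocks, whose size sequence is $(p^{r-s}(p-1),\, p^{r-s}(p-2),\, \underbrace{p^{r-s}(p-1),\dots,p^{r-s}(p-1)}_{2s-2})$. This is precisely the size sequence of $\lambda(2sp-(2s+1),j)$ with each entry multiplied by the common factor $p^{r-s}$; since the block count and the size pattern are fixed uniformly across the whole range of $k$, the merging count we seek cannot depend on the particular choice of $k$.

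Next I would argue that merging, as defined in Definition \ref{block} and used in Lemma \ref{merging}, is purely a choice of which consecutive blocks to fuse; it is governed by the adjacency of the blocks, not by their absolute sizes. Multiplying every block size by the common factor $p^{r-s}$ therefore induces a size-preserving bijection between the merged configurations of $\lambda(2sp-(2s+1),j)$ and those of $\lambda(p^{r-s}[2sp-(2s+1)],k)$, under which a configuration with $l^{'}$ surviving blocks is sent to one with $l^{'}$ surviving blocks, so that each range $1\leq l^{'}\leq l$, $0\leq l\leq 2s-2$ is preserved. I would then apply Lemma \ref{merging} with its block-count parameter equal to $2s$ (admissible since $s\geq 1$ forces $2s\geq 2$) to the unscaled partition $\lambda(2sp-(2s+1),j)$, obtaining exactly $2^{2s-1}$ merged configurations; the bijection transfers this value verbatim to $\lambda(p^{r-s}[2sp-(2s+1)],k)$.

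The statement is thus essentially a direct specialization of Lemma \ref{merging} combined with the scaling device, so no genuinely new combinatorics is required. The only points demanding care — and these constitute the mild main obstacle — are (i) confirming that the merging count is genuinely independent of the hook index $k$ over its stated range, which follows because Remark \ref{nob} fixes the block count and size pattern uniformly, and (ii) checking that the two-case split in the proof of Lemma \ref{merging} (first two blocks fused versus not fused) survives the passage to the scaled diagram, which it does because the opening block sizes $p^{r-s}(p-1)$ and $p^{r-s}(p-2)$ occupy the same structural positions as $p-1$ and $p-2$ in the unscaled case.
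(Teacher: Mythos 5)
Your proposal is correct and follows essentially the same route as the paper, whose entire proof is the single line that the result follows from Lemma \ref{merging} and Lemma \ref{lem mp}: you specialize the block-count parameter of Lemma \ref{merging} to $2s$ and transfer the count to the scaled partition via the $p^{r-s}$-scaling underlying Lemma \ref{lem mp}. Your additional appeal to Remark \ref{nob} to justify independence of the hook index $k$ is a reasonable fleshing-out of a step the paper leaves implicit.
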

	\begin{proof}
		The proof follows from Lemma \ref{merging} and Lemma \ref{lem mp}.
	\end{proof}
	
	\begin{lem}
		Let $\lambda(p^{r-s-1}[(2s+1)p-(2s+2)],k)\in \Lambda_{2}, ~p^{r-s-1}[sp-(s+1)]\leq k<(s+1)p^{r-s-1}(p-1) $ be a hook partition of size $p^{r-s-1}[(2s+1)p-(2s+2)]$, where $r\geq2$ and $1\leq s \leq r-1$. Then the total number of ways in which \textbf{\textit{p}}-Young diagram with $2s+1$ blocks can be merged into $l^{'}$ blocks is $2^{2s}$, where $1\leq l^{'}\leq l$ and $0\leq l\leq 2s-1$.  
	\end{lem}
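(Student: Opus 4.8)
The plan is to deduce this directly from the merging count of Lemma~\ref{merging} together with the scaling identity of Lemma~\ref{lem mp 1}, exactly paralleling the proof of the preceding even-block lemma, which combined Lemma~\ref{merging} with Lemma~\ref{lem mp}.

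First I would record, using Remark~\ref{nob}(2), that the $p$-Young diagram of $\lambda(p^{r-s-1}[(2s+1)p-(2s+2)],k)$ consists of exactly $2s+1$ blocks, with sizes $\bigl(p^{r-s-1}(p-1),\,p^{r-s-1}(p-2),\,\underbrace{p^{r-s-1}(p-1),\dots,p^{r-s-1}(p-1)}_{2s-1}\bigr)$. Its base partition $\lambda((2s+1)p-2(s+1),j)$ (the $r=s+1$ specialization appearing in Lemma~\ref{lem mp 1}) has the same block pattern with every size divided by $p^{r-s-1}$, namely $(p-1,p-2,p-1,\dots,p-1)$ with $2s-1$ trailing copies of $p-1$, hence also $2s+1$ blocks. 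The crucial observation is that $(2s+1)p-2(s+1)=(2s+1)p-((2s+1)+1)$, so this base partition is precisely of the form $\lambda(kp-(k+1),j)$ from Lemma~\ref{merging} with $k=2s+1$, which is the odd branch of that lemma.

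Next I would invoke Lemma~\ref{lem mp 1}: raising each $p$-content monomial to the $p^{r-s-1}$ power furnishes a size-preserving bijection between the mergings of the $2s+1$ blocks of $\lambda(p^{r-s-1}[(2s+1)p-(2s+2)],k)$ and the mergings of the $2s+1$ blocks of $\lambda((2s+1)p-2(s+1),j)$. Indeed, merging a consecutive run of $p^{r-s-1}(p-1)$-blocks corresponds under the scaling to merging the analogous run of $(p-1)$-blocks, so the composition data $\sum_{i=1}^{l'}a_i=l$ indexing a merging pattern is literally the same on both sides. Consequently the two merging counts agree term by term in $l'$ and in $l$.

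Finally I would apply Lemma~\ref{merging} with $k=2s+1$: the number of $p$-Young diagrams obtained by merging the $2s+1$ blocks into $l'$ blocks, ranging over $0\le l\le (2s+1)-2=2s-1$ with $1\le l'\le l$, equals $2^{(2s+1)-1}=2^{2s}$. Transporting this along the bijection of the previous step yields the stated total $2^{2s}$, with admissible ranges $1\le l'\le l$ and $0\le l\le 2s-1$ matching Lemma~\ref{merging} under $k=2s+1$. Since the argument is a corollary of two already-established results, I expect no genuine obstacle; the only points requiring care are confirming that the odd branch of Lemma~\ref{merging} is the relevant one (i.e.\ that $k=2s+1$ is odd) and that the scaling by $p^{r-s-1}$ preserves both the number of blocks and the admissible ranges of $l$ and $l'$, which it does because it merely rescales block sizes without altering their count or relative order.
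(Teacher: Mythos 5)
Your proposal is correct and takes essentially the same route as the paper, whose entire proof is the single sentence that the result follows from Lemma~\ref{merging} and Lemma~\ref{lem mp 1}. You have simply filled in the details the paper leaves implicit: identifying $k=2s+1$ in the odd branch of Lemma~\ref{merging} and using the $p^{r-s-1}$ scaling of Lemma~\ref{lem mp 1} (together with the block count from Remark~\ref{nob}) to transport the count $2^{2s}$.
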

	\begin{proof}
		The proof follows from Lemma \ref{merging} and Lemma \ref{lem mp 1}.
	\end{proof}
	
	\begin{Def}
		Let $K_{\lambda(p^{r-s}(2s)),\alpha}~(\text{and}~K_{\lambda(p^{r-s-1}(2s+1)),\gamma})$ denote the number of $\textbf{\textit{SSYT}}_{\textbf{\textit{p}}}$ of size $p^{r-s}[2sp-(2s+1)]~(\text{and}~p^{r-s-1}[(2s+1)p-2(s+1)])$ with \textbf{\textit{p}}-content $\alpha\in\mathbb{Q}^{(r,s)}_{2s}~(\text{and}~\gamma \in\mathbb{Q}^{(r,s)}_{2s+1})$. These numbers are called \textbf{\textit{p}}-Kostka numbers.   
	\end{Def}
	\begin{lem}\label{kostka}
		\begin{enumerate}
			\item $K_{\lambda(p^{r-s}(2s)),\beta^{(k)}_{l,l^{'},J}}=   K_{\lambda(2s),\alpha^{(k)}_{l,l^{'},J}}$, where $\beta^{(k)}_{l,l^{'},J}=p^{r-s}\alpha^{(k)}_{l,l^{'},J}$ for $k=1,2$. 
			\item 	$K_{\lambda(p^{r-s}(2s+1)),\beta^{(k)}_{l,l^{'},J}}=   K_{\lambda(2s+1),\gamma^{(k)}_{l,l^{'},J}}$, where $\beta^{(k)}_{l,l^{'},J}=p^{r-s-1}\gamma^{(k)}_{l,l^{'},J}$ for $k=1,2$. 
		\end{enumerate}
	\end{lem}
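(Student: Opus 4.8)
The plan is to exhibit an explicit content-preserving bijection between the two families of semistandard $p$-Young tableaux being counted, from which the equality of their cardinalities — the two $p$-Kostka numbers — follows at once. The natural candidate is the scaling map $\Phi$ that sends a $\textbf{\textit{SSYT}}_{\textbf{\textit{p}}}$ $T$ of shape $\lambda(2sp-(2s+1),j)$ to the tableau $\Phi(T)$ of shape $\lambda(p^{r-s}[2sp-(2s+1)],i)$ obtained by replacing each block of size $a(p-1)$ (respectively $p-2$) by the corresponding block of size $p^{r-s}a(p-1)$ (respectively $p^{r-s}(p-2)$), while keeping exactly the same integer entry in each block. First I would record that, by Remark \ref{nob}, both shapes decompose into $2s$ blocks whose sizes differ only by the common factor $p^{r-s}$, so $\Phi$ is well defined at the level of unmerged diagrams.

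Next I would check that $\Phi$ respects the merging structure. By the preceding merging lemmas, the number of admissible ways a $p$-Young diagram with $2s$ blocks can be merged is $2^{2s-1}$ in both cases, and the merging patterns are indexed identically by the compositions $\sum_{i=1}^{l^{'}} a_{i}=l$ together with the binary choice of whether the first two blocks are combined. Since $\Phi$ merely multiplies every block length by $p^{r-s}$, it carries each merging pattern of the small shape to the matching pattern of the large shape bijectively, so merged diagrams correspond to merged diagrams. The filling is preserved as well: because $\Phi$ leaves the entries $(j_{1},\dots,j_{l^{'}+1})$ (or $(j_{1},\dots,j_{l^{'}+2})$) untouched and does not alter the relative left-to-right or top-to-bottom ordering of the blocks, the requirement that entries strictly increase along the first row and along the first column holds for $\Phi(T)$ exactly when it holds for $T$. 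Thus $\Phi$ is a bijection on the underlying filled diagrams, with inverse given by dividing each block length by $p^{r-s}$.

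The content is then tracked by Lemma \ref{lem mp}: the monomial of $\Phi(T)$ is the $p^{r-s}$-th power of the monomial of $T$, that is $x^{\beta}_{\lambda(p^{r-s}[2sp-(2s+1)],i)}=\bigl(x^{\alpha}_{\lambda(2sp-(2s+1),j)}\bigr)^{p^{r-s}}$ with $\beta^{(k)}_{l,l^{'},J}=p^{r-s}\alpha^{(k)}_{l,l^{'},J}$. Hence $T$ has $p$-content $\alpha^{(k)}_{l,l^{'},J}$ if and only if $\Phi(T)$ has $p$-content $\beta^{(k)}_{l,l^{'},J}$, so $\Phi$ restricts to a bijection between the two sets enumerated by the respective Kostka numbers, and part (1) follows for $k=1,2$. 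Part (2) is handled identically, replacing $2s$ by $2s+1$, the factor $p^{r-s}$ by $p^{r-s-1}$, and invoking Lemma \ref{lem mp 1} together with the $2^{2s}$ merging count in place of Lemma \ref{lem mp} and the $2^{2s-1}$ count.

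The only delicate point — and the main obstacle — is confirming that both the strict-increase condition and the merging combinatorics are genuinely unaffected by scaling every block length by a common factor, since one must be sure that no new admissible fillings or mergings appear (or disappear) when the block sizes are inflated. Once that invariance is granted, the bijection $\Phi$ and the content bookkeeping via Lemmas \ref{lem mp} and \ref{lem mp 1} are routine, and the two equalities of $p$-Kostka numbers are immediate.
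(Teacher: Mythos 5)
Your proposal is correct and follows essentially the same route as the paper: the paper's own proof is a one-line appeal to Remark \ref{nob} (the two shapes have the same number of blocks, hence the same collection of admissible fillings), which is precisely the scaling bijection $\Phi$ you construct, made explicit. Your version is more detailed — in particular the explicit tracking of content via Lemmas \ref{lem mp} and \ref{lem mp 1} — but it is the same argument.
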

	\begin{proof}
		\begin{enumerate}
			\item	Since the maximum number of blocks in the $\textbf{\textit{SSYT}}_{\textbf{\textit{p}}}$ of shapes $\lambda([2sp-(2s+1)],k^{'})$ and $\lambda(p^{r-s}[2sp-(2s+1)],k)$ are the same (by remark \ref{nob}), it follows that the number of different types of filling to the $\textbf{\textit{SSYT}}_{\textbf{\textit{p}}}$ with the same number of blocks is also the same.\label{1}
			\item The proof is similar to (\ref{1}).
		\end{enumerate}
	\end{proof}
	
	\begin{Def}
		\noindent\textbf{\textbf{\textit{p}}-Schur function}
		\begin{itemize}
			\item For any partition $\lambda(p^{r-1}(p-1),k)\in\Lambda$, we define a \textbf{\textit{p}}-Schur function as
			$$\mathcal{S}_{\lambda(p^{r-1}(p-1)),k}(x) = \sum_{i\geq1} x^{p^{r-1}(p-1)}_{i}.$$        
			\item For any partition $\lambda(p^{r-s}[2sp-(2s+1)],k) \in \Lambda_{1}$, we define a \textbf{\textit{p}}-Schur function as
			
			$$\mathcal{S}_{\lambda(p^{r-s}[2sp-(2s+1)],k)}(x) =\sum_{\substack{I \\ |I|=2s}} \sum_{\alpha\in \mathbb{Q}^{(r,s)}_{2s}} x^{\alpha(T^{l,l^{'},J})}.$$    
			\item For any partition $\lambda(p^{r-s-1}[(2s+1)p-2(s+1)],k) \in \Lambda_{2}$, we define a \textbf{\textit{p}}-Schur function as
			
			$$\mathcal{S}_{\lambda(p^{r-s-1}[(2s+1)p-2(s+1)],k)}(x) = \sum_{\substack{I\\|I|=2s+1}}\sum_{\gamma\in \mathbb{Q}^{(r,s)}_{2s+1}} x^{\gamma(T^{l,l^{'},J})}.$$                                      \end{itemize}              	
	\end{Def}
	\begin{lem}\label{schur}
		\begin{enumerate}
			\item $S_{\lambda(p^{r-s}[2sp-(2s+1)],k)}(x)=S_{\lambda(p^{r-s}[2sp-(2s+1)],k^{'})}(x)$, where $p^{r-s}[sp-(s+1)\leq k,k^{'}<sp^{r-s}(p-1)]$
			\item $S_{\lambda(p^{r-s-1}[(2s+1)p-2(s+1)],k)}(x)=S_{\lambda(p^{r-s-1}[(2s+1)p-2(s+1)],k^{'})}(x)$, where $p^{r-s}[sp-(s+1)\leq k,k^{'}<(s+1)p^{r-s}(p-1)]$ 
		\end{enumerate}
	\end{lem}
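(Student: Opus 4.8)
The plan is to show that the right-hand side of the defining formula for the $p$-Schur function carries no genuine dependence on the hook parameter $k$, so that the asserted equality holds by inspection once the underlying combinatorial data are identified. Concretely, in the definition
$$\mathcal{S}_{\lambda(p^{r-s}[2sp-(2s+1)],k)}(x)=\sum_{\substack{I\\|I|=2s}}\sum_{\alpha\in\mathbb{Q}^{(r,s)}_{2s}}x^{\alpha(T^{l,l^{'},J})},$$
the outer sum ranges over index sets $I$ with $|I|=2s$ and the inner sum over $\alpha\in\mathbb{Q}^{(r,s)}_{2s}$; neither summation range, nor the set $\mathbb{Q}^{(r,s)}_{2s}$ itself, refers to $k$. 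Thus the burden of the proof is to verify that the admissible $\textbf{\textit{SSYT}}_{\textbf{\textit{p}}}$ and the monomials $x^{\alpha(T^{l,l^{'},J})}$ they produce are literally the same for the shapes indexed by $k$ and by $k^{'}$.

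First I would invoke Remark \ref{nob}: for every $k$ in the range $p^{r-s}[sp-(s+1)]\le k<sp^{r-s}(p-1)$, the \textbf{\textit{p}}-Young diagram of $\lambda(p^{r-s}[2sp-(2s+1)],k)$ consists of exactly $2s$ blocks of the fixed sizes $(p^{r-s}(p-1),p^{r-s}(p-2),p^{r-s}(p-1),\dots,p^{r-s}(p-1))$. Hence the block type---the number of blocks together with their total sizes---is identical for $k$ and $k^{'}$. Next I would appeal to Lemma \ref{merging}, together with Lemma \ref{lem mp}, to note that the number of ways of merging the $2s$ blocks into $l^{'}$ blocks, and hence the set of admissible fillings, depends only on this block type and not on $k$. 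Finally, since the \textbf{\textit{p}}-content of an $\textbf{\textit{SSYT}}_{\textbf{\textit{p}}}$ records only how many times each entry $j_{i}$ fills the blocks, the monomial $x^{\alpha(T^{l,l^{'},J})}$ attached to a filling is determined by the block sizes and the chosen entries, quantities common to both shapes.

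The step I expect to require the most care is confirming that the hook parameter $k$ really does drop out of the \textbf{\textit{p}}-content. Distinct admissible values of $k$ correspond to different splittings of each block $B_{m,n}$ into its horizontal part $m$ and its vertical part $n$---precisely the data distinguished by the numbering of Definition \ref{Definition pob}---so a priori the two hook diagrams are not equal as Young diagrams. The point to make precise is that the \textbf{\textit{p}}-content counts only the total occupancy $m+n$ of each block by a given entry and is therefore blind to the horizontal/vertical split; consequently the assignment of fillings to monomials is the same for $k$ and $k^{'}$. Once this is established, the two displayed sums agree term by term, giving the first equality. The second equality, for shapes in $\Lambda_{2}$, follows by the identical argument with $\mathbb{Q}^{(r,s)}_{2s}$ replaced by $\mathbb{Q}^{(r,s)}_{2s+1}$, using the $(2s+1)$-block description in Remark \ref{nob} and Lemma \ref{lem mp 1} in place of Lemma \ref{lem mp}.
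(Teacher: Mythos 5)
Your proposal is correct and follows essentially the same route as the paper: both rest on the observation that for all admissible $k$ the \textbf{\textit{p}}-Young diagram has the same number of blocks of the same sizes (only the horizontal/vertical split within each block varies), so the admissible fillings and their monomials coincide. Your version merely makes explicit the point the paper leaves implicit --- that the \textbf{\textit{p}}-content records only total block occupancy and is therefore blind to the split that distinguishes $k$ from $k^{'}$.
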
		
	\begin{proof}
		\begin{enumerate}
			
			\item The proof is obvious, for each $\textbf{\textit{SSYT}}_{\textbf{\textit{p}}}$ of shape $\lambda(p^{r-s}[2sp-(2s+1)],j)$ and size $p^{r-s}[2sp-(2s+1)]$ is attained by adding different types of block with same size. Since the number of blocks and their sizes are the same for all hook partitions with same size, the filling are also similar, which proves the equality. \label{11}
			
			\item The proof is similar to (\ref{11}). 
		\end{enumerate}
	\end{proof}	
	\begin{thm}(\textbf{Lacunary Cauchy identity})
	\begin{align*}
		\sum_{\lambda\in \Lambda}\mathcal{S}_{\lambda}(x)\mathcal{S}_{\lambda}(y)&= (p-1)^{2}\sum_{i,j\geq 1}g(x^{p-1}_{i}y^{p-1}_{j})+\sum_{s\geq 1}\sum_{\alpha,\beta\in \mathbb{Q}_{\lambda,2s}}K_{\lambda(2s),\alpha}K_{\lambda(2s),\beta}\cdot g(x^{\alpha},y^{\beta})\\&~~~+p^{2} \sum_{s\geq1}\sum_{\gamma,\delta\in\mathbb{Q}_{\lambda,2s+1}}K_{\lambda(2s+1),\gamma}K_{\lambda(2s+1),\delta}\cdot g(x^{\gamma},y^{\delta})
	\end{align*}	
	where $g(x)=\sum_{n\geq 0}p^{2n}x^{p^{n}}$.
\end{thm}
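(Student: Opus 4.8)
The plan is to evaluate the left-hand side $\sum_{\lambda\in\Lambda}\mathcal{S}_\lambda(x)\mathcal{S}_\lambda(y)$ by partitioning the index set $\Lambda$ according to the block structure of the shapes, and then matching each piece to one of the three summands on the right. By Remark \ref{nob}, a shape $\lambda(p^{r-s}[2sp-(2s+1)],k)\in V_s^r$ carries $2s$ blocks and a shape $\lambda(p^{r-s-1}[(2s+1)p-2(s+1)],k)\in W_s^r$ carries $2s+1$ blocks, while the one-dimensional shapes in $V_0^r$ and $W_0^r$ consist of a single block. So I would split $\sum_{\lambda\in\Lambda}$ into the one-block family, the even-block family ($\Lambda_1$, $s\geq 1$), and the odd-block family ($\Lambda_2$, $s\geq 1$); these should produce the first, second, and third terms respectively.

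First I would collapse the sum over $k$ within each shape class. By Lemma \ref{schur}, $\mathcal{S}_\lambda$ is constant as $k$ ranges over its admissible interval, so the contribution of one class equals the number of admissible $k$ times the common value $\mathcal{S}_{(r,s)}(x)\mathcal{S}_{(r,s)}(y)$; this count is read directly off the index sets $V_s^r$ and $W_s^r$ and is a power of $p$ in $r-s$. Next I would use Lemma \ref{lem mp} (and Lemma \ref{lem mp 1} in the odd case) together with Lemma \ref{kostka} to rewrite the level-$r$ Schur function in terms of base-level data: every level-$r$ monomial is the $p^{r-s}$-th (resp. $p^{r-s-1}$-th) power of the corresponding base monomial at level $s$, and the $p$-Kostka numbers are unchanged, so $\mathcal{S}_{(r,s)}(x)=\sum_\alpha K_{\lambda(2s),\alpha}\,(x^\alpha)^{p^{r-s}}$, and likewise with $\gamma$ and $p^{r-s-1}$ for the $W$-family.

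Substituting these expansions and forming the product $\mathcal{S}_{(r,s)}(x)\mathcal{S}_{(r,s)}(y)$ introduces exactly the Kostka products $K_{\lambda(2s),\alpha}K_{\lambda(2s),\beta}$ (resp. $K_{\lambda(2s+1),\gamma}K_{\lambda(2s+1),\delta}$) and the paired monomials $(x^\alpha)^{p^{r-s}}(y^\beta)^{p^{r-s}}$. Holding $s$ fixed and summing over $r\geq s$, that is over $n=r-s\geq 0$, the monomials $(x^\alpha)^{p^n}(y^\beta)^{p^n}$ together with the accumulated powers of $p$ should assemble into the lacunary series $g$. I expect the constant prefactors $(p-1)^2$ and $p^2$ to come from the degree $p^{s-1}(p-1)$ of the representations and from the count of innermost paths from $\lambda(2p-3,p-2)$ down to $\lambda(p-1,t)$ (which contributes the factor $p-1$), with the odd-block family carrying the extra factor of $p$ arising from the larger block size in $W_s^r$.

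The main obstacle is the power-of-$p$ bookkeeping in this final assembly: one must verify that the number of admissible $k$ in each class, combined with the degree of the corresponding representation and the substitution $x\mapsto x^{p^{r-s}}$, reproduces precisely the coefficient $p^{2n}$ in $g(x)=\sum_{n\geq0}p^{2n}x^{p^n}$ together with the exact prefactors $(p-1)^2$ and $p^2$. Making these multiplicities line up --- rather than the reductions themselves, which are immediate from Lemmas \ref{schur}, \ref{lem mp}, \ref{lem mp 1}, and \ref{kostka} --- is the delicate part, and I would handle the one-block, even-block, and odd-block families separately, reconciling the coefficients term by term and using the regular-representation identity $\sum_\lambda(\dim\lambda)^2=|G_r|$ as a consistency check.
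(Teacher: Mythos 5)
Your proposal follows essentially the same route as the paper: the same three-way decomposition of $\Lambda$ into the one-block, $2s$-block, and $(2s+1)$-block families, the same reduction to base-level data via Lemmas \ref{schur}, \ref{lem mp}, \ref{lem mp 1}, and \ref{kostka}, and the same reassembly of the geometric tower over $n=r-s$ into the lacunary series $g$, with the prefactors $(p-1)^2$ and $p^2$ traced to the multiplicity of admissible $k$ in each class. The coefficient bookkeeping you flag as delicate is exactly the step the paper's proof also leaves largely implicit, so your account is correct and matches the paper's argument.
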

\begin{proof}
	\begin{itemize}
		\item Let $\lambda(p^{r-1}(p-1),k)\in V^{r}_{0}(W^{r}_{0}), ~r\geq 1$ be a hook partition of degree $1$. Then
		\begin{align*}
			\sum_{r\geq 1} \mathcal{S}_{\lambda(p^{r-1}(p-1),k)}(x)\mathcal{S}_{\lambda(p^{r-1}(p-1),k)}(y)
			&= \sum_{i,j\geq 1}(p-1)^{2}x^{p-1}_{i}y^{p-1}_{j}+p^{2}(p-1)^{2}(x^{p-1}_{i})^{p}(y^{p-1}_{j})^{p}\\&+p^{4}(p-1)^{2}(x^{p-1}_{i})^{p^{2}}(y^{p-1}_{j})^{p^{2}}+\dots \\
			&=\sum_{i,j\geq 1}(p-1)^{2}\sum_{n\geq0}p^{2n}(x^{p-1}_{i})^{p^{n}}(y^{p-1}_{j})^{p^{n}}\\
			& = (p-1)^{2}\sum_{i,j\geq 1}g(x^{p-1}_{i}y^{p-1}_{j})
		\end{align*}
		\item Let $\lambda(p^{r-s}[2sp-(2s+1)],k)\in V^{r}_{s}, ~r\geq 1$ be a hook partition of degree $p^{s-1}(p-1)$. Then
		\begin{align*}
			&\hspace*{-5cm}\sum_{\substack{1\leq s\leq r, \\r\geq 1}} \mathcal{S}_{\lambda(p^{r-s}[2sp-(2s+1)],k)}(x)\mathcal{S}_{\lambda(p^{r-s}[2sp-(2s+1)],k^{'})}(y)\\
			&\hspace*{-3.3cm}= \sum_{\substack{1\leq s\leq r, \\r\geq 1}}\left(\sum_{\alpha\in\mathbb{Q}^{(r,s)}_{2s}}x^{\alpha}\sum_{\beta\in\mathbb{Q}^{(r,s)}_{2s}}y^{\beta}\right)
		\end{align*}
		By using Lemma \ref{kostka}, Lemma \ref{schur}, Lemma \ref{lem mp} and rearranging the summation, we get
		\begin{align*}
			&\hspace*{1cm}=\sum_{s\geq 1}\sum_{\alpha,\beta\in\mathbb{Q}^{(s,s)}_{2s}}K_{\lambda(2s),\alpha}K_{\lambda(2s),\beta}\cdot(x^{\alpha}y^{\beta})+K_{\lambda(2s),\alpha}K_{\lambda(2s),\beta}\cdot p^{2}(x^{\alpha}y^{\beta})^{p}+\dots \\
			&\hspace*{1cm}=\sum_{s\geq 1}\sum_{\alpha,\beta\in\mathbb{Q}^{(s,s)}_{2s}}K_{\lambda(2s),\alpha}K_{\lambda(2s),\beta}\cdot g(x^{\alpha}y^{\beta})
		\end{align*}
		\item Let $\lambda(p^{r-s-1}[(2s+1)p-2(s+1)],k)\in W^{r}_{s}, ~r\geq 2$ be a hook partition of degree $p^{s-1}(p-1)$. Then
		\begin{align*}
			&\hspace*{-3.3cm}\sum_{\substack{1\leq s\leq r-1, \\r-1\geq 1}} \mathcal{S}_{\lambda(p^{r-s-1}[(2s+1)p-2(s+1)],k)}(x)\mathcal{S}_{\lambda(p^{r-s-1}[(2s+1)p-2(s+1)],k^{'})}(y)\\
			&\hspace*{-2cm}=\sum_{\substack{1\leq s\leq r-1, \\r-1\geq 1}}\left(\sum_{\gamma\in\mathbb{Q}^{(r,s)}_{2s+1}}x^{\gamma}\sum_{\delta \in\mathbb{Q}^{(r,s)}_{2s+1}}y^{\delta}\right)
		\end{align*}
		By using Lemma \ref{kostka}, Lemma \ref{schur}, Lemma \ref{lem mp 1} and  rearranging the summation, we get
		\begin{align*}
			&\quad= \sum_{s\geq 1}\sum_{\gamma,\delta\in\mathbb{Q}^{(s,s)}_{2s+1}}p^{2}K_{\lambda(2s+1),\gamma}K_{\lambda(2s+1),\delta}\cdot (x^{\gamma}y^{\delta})+p^{2}K_{\lambda(2s+1),\gamma}K_{\lambda(2s+1),\delta}\cdot p^{2}(x^{\gamma}y^{\delta})^{p}+\dots \\
			&\quad=p^{2}\sum_{s\geq 1}\sum_{\gamma,\delta\in\mathbb{Q}^{(s,s)}_{2s+1}}K_{\lambda(2s+1),\gamma}K_{\lambda(2s+1),\delta}\cdot g(x^{\gamma}y^{\delta})
		\end{align*}
	\end{itemize}
\end{proof}

	\begin{rem}
		\textbf{The series $g(x)=\sum\limits_{n\geq 0}p^{2n}x^{p^{n}}$ obtained in the above theorem is a ``lacunary series". The series $g(x)$ converges absolutely, but we could not find the closed form expression of it.}
	\end{rem}
	\section{Conclusions}
	This paper establishes a Robinson-Schensted correspondence between the group $G_{r}$ (and $SG_{r}$) and the pairs of  standard \textit{p}-Young tableaux. Our approach, which utilizes matrix units computed from the orthogonal primitive idempotents, provides a fresh perspective on the representation theory of these groups. In a future paper, we will build upon this work to define the descents for each tableau in sets $\Lambda_{1}$ and $\Lambda_{2}$. We then derive the generating function for the total number of descents at each vertex. In addition, we present several identities that establish connections between the tableaux and irreducible representations.
	
	\bigskip


\begin{thebibliography}{9}
		\bibitem{[PT1]}  M. Parvathi., A. Tamilselvi., Robinson-Schensted Correspondence for the Signed Brauer Algebras, The Electronic Journal of Combinatorics., 14 (2007). 		\bibitem{[PT2]} M. Parvathi., A. Tamilselvi.,  The Robinson-Schensted correspondence for the $G$-Brauer algebras, Contemporary Mathematics Proceedings., 456 (2008).
		\bibitem{[R]}  G. de B. Robinson., On the Representations of the Symmetric Group, American Journal of Mathematics., 60 (3) (1938), 745-760. 	
		 		
		\bibitem{[BS]} B. E. Sagan., The Symmetric Group Representations, Combinatorial Algorithms, and Symmetric Functions, Second edition, Springer, 2001.
		
		\bibitem{[CS]} C. Schensted., Longest Increasing and Decreasing Subsequences, Canadian Journal of Mathematics., 13 (1961), 179-191.
		
		
		\bibitem{[RS]} R. P. Stanley., Enumerative Combinatorics, Volume 2, Cambridge University Press, 1999. 
		
			
		
		\bibitem{[TD]} A. Tamilselvi and S. Dhilshath.,  Harmony of Walled Klein-4 Brauer Diagrams and Klein-4 Vacillating Tableaux, Indian Journal of Science and Technology., 16 (48) (2023).
		
		\bibitem{[TH]} A. Tamilselvi and D. Hepsi., Study of Matrix units of the group algebras $KG_{r}$ and $KSG_{r}$, Indian Journal of Science and Technology., 17 (48) (2024), 5119-5129.
		
		
		\bibitem{[TVK]} A. Tamilselvi, A. Vidhya and B. Kethesan., Robinson-Schensted correspondence for the Walled Brauer Algebras and the Walled Signed Brauer Algebras, Algebra and its Applications., Springer Singapore, (2016), 195-223.
		
		\bibitem{[HL]} Tom Halverson and Tim Lewandowski., , RSK Insertion for Set Partitions and Diagram Algebras, The Electronic Journal of Combinatorics., 11 (2) (2005).
		
		\bibitem{[AVZ]} A. V. Zelevinsky., A Generalization of the Littlewood-Richardson Rule and the Robinson-Schensted-Knuth Correspondence, J. Algebra., 69, (1981), 82-94.
	\end{thebibliography}
\end{document}